\newcommand{\N}{\mathbb{N}}
\newcommand{\R}{\mathbb{R}}
\newtheorem{thm}{Theorem}[section]
\newtheorem{prop}[thm]{Proposition}
\newtheorem{lemma}[thm]{Lemma}
\newtheorem{definition}[thm]{Definition}
\title[Local Lipschitz continuity with lower order terms]{Local Lipschitz continuity for energy integrals with fast
	growth and lower order terms}
\date{}
\author[A. Torricelli]{Andrea Torricelli}
\address{Andrea Torricelli\\
	Dipartimento di Scienze Matematiche G.L.\,Lagrange, Politecnico di Torino, C.so Duca degli Abruzzi 24, 10129 Torino, Italy
}
\email{andrea.torricelli@polito.it}
\begin{document}
	\maketitle
	{\bf Abstract.} We consider integral functionals with fast growth and the lagrangian explicitly depending on $u$. We prove that the local minimizers are locally Lipschitz continuous.
	\smallskip\par
	\noindent 
	{\bf Keywords.} Elliptic equations, local minimizers, local Lipschitz continuity, bounded slope condition, fast growth
	\smallskip\par
	\noindent 
	{\bf Mathematics Subject Classification.} 35B45, 35B51, 35B65, 35J60, 35J70, 49J40, 49J45

	\section{Introduction}
	Lipschitz regularity for local minimizers of integral functionals and weak solutions to nonlinear
	elliptic partial differential equations in divergence form with non-standard growth conditions is a wide and active field of research, see for instance some recent contributions \cite{BM, BS2022, CMMPdN, DFM2021, EMM, EMMP, EPdN, EPT, GGT, M2021}. The results proved in the present paper are a contribution to this field. Indeed, our paper aims to prove local Lipschitz regularity results for integral functionals of the type
	\begin{equation*}
		\label{funz-modello}
		\mathcal{F}(u)=\int_{\Omega} f(Du) + g(x, u) \, dx.
	\end{equation*}
	The interest in kind of functionals is not purely academical. Indeed, these functionals have many applications, such as in elastoplastic torsion problems or in image restoration problems (see \cite{GT}). Furthermore, this class of functionals is well known in literature, see for instance: \cite{CCG, DFM2021} concerning
	regularity of local minimizers of a class of integrals of the Calculus of Variations, \cite{DFM2022} where the functionals considered do not necessarily satisfy the Euler–Lagrange equation, and \cite{EPT} concerning the local Lipschitz continuity of local minimizers for functionals with slow growth.\\
	
	\noindent
	Our work was inspired by \cite{EMMP, Marc96}, where the authors deal with functionals depending only on $Du$  with general growth assumptions, respectively fast and slow, and by \cite{EPT} where the authors deal with functionals depending on both $u$ and $Du$, but only in the case of slow growth. Indeed, our aim is to apply the techniques used in \cite{EPT} in order to obtain an analogous result but for functionals with fast growth. In the aforementioned papers the authors first prove a suitable a priori estimates, and then apply classical results on the Bounded Slope Condition to get the local Lipschitz continuity. In order to follow the same steps we need to exploit some recent results (see \cite{FT, GT}) that extend the classical ones on the Bounded Slope Conditions (BSC) to our type of functionals.\\

	\noindent
	Our aim is to prove the local Lipschitz continuity of the minimizers of the functional
	\begin{equation}
		\label{functional}
		\int_\Omega f(D u)+ g(x,u)dx, \quad u\in W^{1,1}_{loc}(\Omega),
	\end{equation}
	with $\Omega\subset\R^n$ open and bounded, $n\in\N$ with $n\ge 2$, $f:\R^n\to \R$ a non-negative, convex function of class $C(\R^n)\cap C^2(\R^n\setminus B_{t_0}(0)$ for some $t_0\ge 0$, $g:\R^n\times\R\to\R$ Carathéodory,  and $h_1,h_2:[0,\infty)\to[0,\infty)$ non-identically zero and increasing functions. We assume the following conditions hold on $f$ for all $\xi,\lambda\in\R^n$ such that $|\xi|\ge t_0$:
	\begin{equation}
		\label{H1}
		h_1(|\xi|)|\lambda|^2\le\sum_{i,j=1}^n f_{\xi_i, \xi_j}(\xi)\lambda_1\lambda_j\le h_2(|\xi|)|\lambda|^2; \tag{H1}\
	\end{equation}
	\begin{equation}
		\label{H2}
		h_2(t)t^2\le c_1\left[1+\int_0^t\sqrt{h_1(s+t_0)}ds\right]^\alpha \text{ for $t\ge t_0$}; \tag{H2}
	\end{equation}
	\begin{equation}
		\label{H3}
		h_2(|\xi|)|\xi|^2\le c_2(1+f(\xi))^\beta
		\tag{H3},
	\end{equation}
	for some positive constants $c_1$ and $c_2$, and for $\alpha, \beta$ such that
	\begin{equation*}
		2\le\alpha<2^*, \quad 1\le \beta< \frac{2}{n}\frac{\alpha}{\alpha-2},
	\end{equation*}
	where $f_{\xi_i, \xi_j}$ denotes the $i,j$-th component of the Hessian matrix of $f$ and $2^*:=\frac{2n}{n-2}$ denotes the Sobolev conjugate of $2$. Moreover, we assume the following conditions on $g$ for a.e. $x\in\Omega$:
	\begin{equation}
		\label{G1}|g(x.s_1)-g(x,s_2)|\le L|s_1-s_2|\text{ and for all }s_1,s_2\in\R;\tag{G1}
	\end{equation}
	\begin{equation}
		\label{G2}g(\cdot, 0)\in L^1_{loc}(\Omega);\tag{G2}
	\end{equation}
	\begin{equation}
		\label{G3}s\mapsto g(x,s) \text{ convex},\tag{G3}
	\end{equation}
	for some positive constant $L$. Finally we assume that there exists $K>0$ such that for a.e. $x,y \in \Omega$ it holds
	\begin{equation}
		\label{G4}v\ge u + K|y-x| \Rightarrow g_v^+(y,v)\ge g_v^+(x,u),\tag{G4}
	\end{equation}
	where $g_v^+$ denotes the right derivative of $g$ with respect to the second variable.
	Our main result is the following.
	\begin{thm}
		\label{Lipschitz}
		Given $u\in W_{loc}^{1,1}(\Omega)\cap L_{loc}^\infty(\Omega)$ local minimizer (see Definition \ref{local_min}) of the functional \eqref{functional}. Assume that $f$ satisfies the growth assumptions \eqref{H1}-\eqref{H3}, and that $g$ satisfies hypothesis \textnormal{(G1)}--\textnormal{(G4)}.
		\\
		Then $u$  is locally Lipschitz continuous in $\Omega $ and  there exists $\bar R>0$ such that for every $0<\rho< R<\bar R$,  there exist $\theta\in \R$ and two positive constants $C$ and $\kappa,$ depending on the data of the problem, with $
		C $ depending also on $\rho $, $R$  
		while $\kappa$ depends also on $\|u\|_{L^{\infty}(B_R)}$, $\|g(\cdot, 0)\|_{L^1(B_R)}$ and $|B_R|,$ 
		such that 
		\begin{equation*}
			\Vert Du\Vert _{L^{\infty }(B_{\rho }\,;\mathbb{R}^{n})}\leq \,C\,\left[ \frac{1}{(R-\rho )^{n}} \left (\int_{B_{R}} f(Du)+g(x,u)\,dx +\kappa \right )\right] ^{\theta }
			\label{infinity nu}
		\end{equation*}%
		with $\theta$ depending on $\alpha,\beta,$ and $n$.
	\end{thm}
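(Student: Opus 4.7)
My plan follows the two-step scheme indicated in the introduction: first establish a quantitative a priori Lipschitz estimate for sufficiently regular minimizers of an auxiliary problem with smooth boundary data, then remove the extra regularity by invoking the bounded slope condition extensions of \cite{FT, GT}. Concretely, I fix $B_R\Subset\Omega$ and regularize by replacing $f$ with $f_\varepsilon(\xi):=f(\xi)+\varepsilon|\xi|^2$ and $g$ with a smooth Carathéodory $g_\varepsilon$ satisfying \textnormal{(G1)}--\textnormal{(G4)} uniformly in $\varepsilon$. Since $u\in L^\infty_{loc}$, one can build a smooth boundary datum $\varphi_\varepsilon\to u|_{\partial B_R}$; the regularized problems admit minimizers $u_\varepsilon\in C^{2,\sigma}(\overline{B_R})$ by standard elliptic theory, and at the end I pass to the limit using lower semicontinuity of the energy together with the uniform bound to be proved.

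The heart of the argument is a Caccioppoli-type inequality for $|Du_\varepsilon|$. I would differentiate the Euler--Lagrange equation with respect to $x_s$, test against $\varphi=\eta^{2}\Phi(|Du_\varepsilon|^{2})(u_\varepsilon)_{x_s}$ for $\eta\in C^{1}_c(B_R)$ a cutoff and $\Phi$ a carefully chosen increasing power, and sum over $s$. The principal term on the left, after using \eqref{H1}, dominates
\[
\int_{B_R}\eta^{2}\,h_1(|Du_\varepsilon|)\,\bigl|D\bigl(\Phi(|Du_\varepsilon|^{2})^{1/2}Du_\varepsilon\bigr)\bigr|^{2}dx .
\]
Differentiating $g_\varepsilon(x,u_\varepsilon)$ produces $g_v(x,u_\varepsilon)(u_\varepsilon)_{x_s}^{2}$, which is non-negative by \eqref{G3} and may be discarded, plus an $x$-derivative cross-term that is controlled through \eqref{G1} and \eqref{G4}: the right-derivative monotonicity in \eqref{G4} converts the a priori uncontrolled $g_{x_s}$ into a quantity comparable to $|Du_\varepsilon|$, which is absorbable into the good term.

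I then combine the Caccioppoli estimate with the Sobolev embedding of exponent $2^{*}$ to obtain a reverse-Hölder/self-improving estimate for $\Phi(|Du_\varepsilon|^{2})^{1/2}|Du_\varepsilon|$. Hypothesis \eqref{H2} is used exactly to bound the upper ellipticity $h_{2}(t)t^{2}$ by an $\alpha$-power of the primitive of $\sqrt{h_{1}}$, producing a Sobolev-compatible gain whose exponent, together with the restriction $\beta<\tfrac{2}{n}\tfrac{\alpha}{\alpha-2}$ entering through \eqref{H3}, keeps the corresponding Moser iteration on the $L^{p}$ norms of $|Du_\varepsilon|$ convergent as $p\to\infty$. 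The outcome is the desired estimate for $\|Du_\varepsilon\|_{L^\infty(B_\rho)}$ in terms of the energy on $B_R$, with exponent $\theta=\theta(\alpha,\beta,n)$ and with the additive constant $\kappa$ depending only on $\|u\|_{L^\infty(B_R)}$, $\|g(\cdot,0)\|_{L^{1}(B_R)}$ and $|B_R|$, all independent of $\varepsilon$.

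The final step transfers the estimate to $u$ itself. Here I invoke the generalized BSC of \cite{FT, GT}: thanks to \eqref{G4} and $u\in L^\infty_{loc}$, one obtains a boundary Lipschitz control for $\varphi_\varepsilon$ that is uniform in $\varepsilon$; hence $\mathcal{F}_\varepsilon(u_\varepsilon)\to\mathcal{F}(u)$, and the uniform $L^\infty$ bound on $Du_\varepsilon$ is inherited by $Du$. I expect the main obstacle to be the Caccioppoli--iteration step: in the fast growth regime the competition between $h_{1}$, $h_{2}$ and the perturbation coming from $g$ is delicate, every application of Young's inequality must be calibrated so that the admissible range $\beta<\tfrac{2}{n}\tfrac{\alpha}{\alpha-2}$ is preserved, and keeping the $g$-contribution from spoiling the scaling of the iteration is the true technical crux, made possible only by the combination of \eqref{G3} and \eqref{G4}.
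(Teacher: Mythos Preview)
Your overall architecture (regularize, derive an a priori estimate via a Caccioppoli inequality plus Moser iteration, invoke the BSC machinery, pass to the limit) is the same as the paper's, but two of the mechanisms are misplaced and one step is circular.

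The assertion that the regularized minimizers $u_\varepsilon\in C^{2,\sigma}(\overline{B_R})$ ``by standard elliptic theory'' is precisely what is at issue. Adding $\varepsilon|\xi|^2$ to a fast-growth $f$ does not make the problem uniformly elliptic; the ellipticity is still governed by $h_1,h_2$ and there is no off-the-shelf $C^{2,\sigma}$ result to cite. In the paper the approximating minimizers $v_{k,\varepsilon}$ are shown to be Lipschitz \emph{before} the Euler--Lagrange equation is differentiated, and this is obtained not from elliptic regularity but from the BSC barrier construction of \cite{FT,GT}: one builds explicit sub/super\-solutions from level sets of the polar $\tilde f_k^*$, applies the comparison principle (this is where \eqref{G3} and \eqref{G4} enter), and concludes $v_{k,\varepsilon}\in W^{1,\infty}(B_R)$ for $R$ small enough. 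Only then can the a priori estimate of Lemma~\ref{apriori_estimate} be applied. In your plan the BSC step is used only to control the boundary datum and to pass to the limit at the end, which leaves the smoothness of $u_\varepsilon$, and hence the whole iteration, unjustified.

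The handling of the $g$-contribution in the Caccioppoli step is also off. After differentiating the first variation in $x_k$ one integrates the $g$-term by parts to obtain $-\int g_v(x,u)\,\varphi_{x_k}\,dx$, and the entire lower-order contribution is then controlled using only \eqref{G1}, i.e.\ $|g_v|\le L$, together with Young's inequality and the elementary bound $h_1(t)h_2(t)t^2\ge h_1(1)^2$ for $t\ge 1$. Neither \eqref{G3} nor \eqref{G4} is used in the a priori estimate. In particular the term you describe as ``$g_v(x,u_\varepsilon)(u_\varepsilon)_{x_s}^{2}$, non-negative by \eqref{G3}'' does not occur in that form (convexity gives $g_{vv}\ge 0$, not $g_v\ge 0$), and \eqref{G4} is a monotonicity condition on $g_v^+$ designed for the comparison principle in the barrier argument, not a bound on $\partial_{x_s}g$. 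So the ``uncontrolled $g_{x_s}$'' you propose to tame via \eqref{G4} is a phantom: once one integrates by parts, only the bounded factor $g_v$ survives, and the delicate balance in the iteration involves $h_1$, $h_2$, $\alpha$, $\beta$ alone.
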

	\noindent
	Fundamental to prove Theorem \ref{Lipschitz} will be the following apriori estimate.
	\begin{lemma}
		\label{apriori_estimate}
		Let $f$ satisfy hypothesis \eqref{H1} to \eqref{H3}, $g$ satisfy hypothesis \eqref{G1}, and $u\in W^{1,\infty}_{loc}(\Omega)$ be a local minimizer of \eqref{functional}.
		Then, for every $R>0$ small enough and $\rho$ such that $0<\rho<R$, there exists $\theta\in \R$ and a positive constant $C$ depending on $\alpha, \beta, R, \rho, c_1,c_2, h_1(t_0)$, and $n$ such that
		\begin{equation}
			\label{apriori}
			||D u||_{L^\infty(B_\rho, \R^n)}\le C\left[\int_{B_{R}}1+f(Du)dx\right]^\theta
		\end{equation}
		with $\theta$ depending on $\alpha,\beta,$ and $n$.
	\end{lemma}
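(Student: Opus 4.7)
I aim to derive a Caccioppoli-type inequality for $Du$, upgrade it via Sobolev embedding into a reverse-H\"older inequality, and close the argument by a Moser iteration whose convergence is precisely ensured by the condition $\beta<\frac{2}{n}\frac{\alpha}{\alpha-2}$. Since $u\in W^{1,\infty}_{loc}(\Omega)$ is a local minimizer and $g$ is Lipschitz in the second variable by \eqref{G1}, $u$ satisfies the weak Euler--Lagrange equation
\[
\int_\Omega\sum_{i=1}^n f_{\xi_i}(Du)\,\varphi_{x_i}\,dx=-\int_\Omega g_v(x,u)\,\varphi\,dx\qquad\forall\,\varphi\in W^{1,\infty}_c(\Omega),
\]
with $|g_v(x,u)|\le L$ almost everywhere; in particular the lower-order forcing lives in $L^\infty$.

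\textbf{Step 1: Caccioppoli estimate via difference quotients.} For $s\in\{1,\dots,n\}$ and $|h|$ small, I apply the translation $\tau_{h,s}$ to the Euler--Lagrange equation and test with
\[
\varphi=\eta^{2}\,\Phi\bigl(|Du|\bigr)\,\tau_{h,s}u,
\]
where $\eta\in C_c^\infty(B_R)$ is a cutoff between $B_\rho$ and $B_R$ and $\Phi$ is a non-decreasing Lipschitz function to be chosen. The ellipticity \eqref{H1} produces the principal good term
\[
\int_{B_R}\eta^{2}\,\Phi(|Du|)\,h_1(|Du|)\,|\tau_{h,s}Du|^{2}\,dx,
\]
whereas the contribution involving $g$ is of the form $\int \tau_{h,s} g_v(\cdot,u)\cdot\varphi\,dx$ and is bounded using only $|g_v|\le L$, which keeps it a lower-order absorbable quantity. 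Letting $h\to 0$ and summing over $s$ gives $u\in W^{2,2}_{loc}(\Omega)$ together with a weighted Caccioppoli inequality for $|D^{2}u|$.

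\textbf{Step 2: Gauge, Sobolev embedding, and reverse H\"older.} I introduce the natural gauge
\[
H(t):=1+\int_{0}^{t}\sqrt{h_1(s+t_0)}\,ds,
\]
so that \eqref{H2} reads $h_2(t)t^{2}\le c_1\,H(t)^{\alpha}$. Choosing $\Phi$ as an appropriate power of $H(|Du|)$, a chain-rule computation combined with the Sobolev embedding $W^{1,2}_0\hookrightarrow L^{2^*}$ applied to $\eta\,H(|Du|)^{\gamma/2}$ converts the Caccioppoli bound into a reverse-H\"older inequality of the schematic form
\[
\left(\int_{B_R}\eta^{2^*}\,H(|Du|)^{\gamma\,2^*/2}\,dx\right)^{2/2^*}\le\frac{c(\gamma)}{(R-\rho)^2}\int_{B_R}H(|Du|)^{\gamma-2}\bigl(1+f(Du)\bigr)^{\beta}\,dx,
\]
where \eqref{H3} has been invoked to replace $h_2(|Du|)|Du|^{2}$ by $(1+f(Du))^{\beta}$.

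\textbf{Step 3: Moser iteration, and the main obstacle.} I iterate the reverse-H\"older inequality on a sequence of nested balls $B_{R_k}\searrow B_\rho$ with exponents $\gamma_k=\gamma_0(2^*/2)^{k}$; the upper bound $\beta<\frac{2}{n}\frac{\alpha}{\alpha-2}$ is the sharp quantitative threshold that keeps the product of the resulting constants summable and forces the iteration to converge to an $L^\infty$ bound of the form
\[
\|H(|Du|)\|_{L^\infty(B_\rho)}\le C\left[\int_{B_R}(1+f(Du))\,dx\right]^{\theta}.
\]
Since $h_1$ is non-identically zero and increasing, $H$ grows at least linearly at infinity, so inverting recovers \eqref{apriori}. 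The critical difficulty lies in Step~2: under fast growth \eqref{H2} the term $\Phi'\cdot h_2(|Du|)|Du|^{2}$ coming from differentiating the test function threatens to dominate the good term $\Phi\cdot h_1(|Du|)$, and it is exactly the interplay between \eqref{H2}, \eqref{H3}, and the admissible range of $\beta$ that allows one to calibrate $\Phi$ so that absorption is possible and the iteration closes.
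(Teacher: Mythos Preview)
Your overall architecture (Euler--Lagrange $\Rightarrow$ difference quotients $\Rightarrow$ Caccioppoli $\Rightarrow$ Sobolev $\Rightarrow$ Moser) is the same as the paper's, and you correctly isolate the analytic tension between the $\Phi'\,h_2$ term and the $\Phi\,h_1$ term. However, the proof as written has a genuine gap in how the iteration closes.

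\textbf{Where the single iteration breaks.} The reverse-H\"older inequality you write in Step~2 has right-hand side $\int H(|Du|)^{\gamma-2}(1+f(Du))^{\beta}$. For a Moser scheme with $\gamma_k=\gamma_0(2^*/2)^k$ to close, the right-hand side at level $\gamma_{k+1}$ must be dominated by the left-hand side at level $\gamma_k$; but the factor $(1+f(Du))^{\beta}$ does not reduce to a power of $H(|Du|)$ from the hypotheses. Hypothesis \eqref{H3} gives only a \emph{lower} bound $(1+f(\xi))^{\beta}\ge c_2^{-1}h_2(|\xi|)|\xi|^{2}$, never an upper bound, so the $(1+f)^{\beta}$ term cannot be absorbed into the $H$-scale and the recursion you describe does not run. (Bounding $(1+f(Du))^{\beta-1}$ by its $L^\infty$ norm would reintroduce $\|Du\|_{L^\infty}$ and make the estimate circular.)

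\textbf{What the paper actually does.} The argument is genuinely two-phase. In the first phase one uses only \eqref{H2}: the Caccioppoli inequality is expressed with $h_2(|Du|)|Du|^{2}$ on the right, the gauge $G$ is built from $\Phi(t)=(t+1)^{2\gamma-2}t^{2}$, and a Moser iteration with the \emph{shifted} exponents $\delta_{i}=\tfrac{2^*}{2}\delta_{i-1}-\tfrac{\alpha}{2}$ (not $\gamma_0(2^*/2)^k$) yields
\[
\|Du\|_{L^\infty(B_{\bar\rho})}^{\,2\cdot 2^*-\alpha/(2^*-2)}\le \frac{C}{(\bar R-\bar\rho)^{n}}\int_{B_{\bar R}} V\,dx,\qquad V:=(1+(|Du|-1)_+)^{2}\,h_2(1+(|Du|-1)_+).
\]
Convergence of this first iteration uses only $\alpha<2^*$. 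In the second phase one takes a \emph{different} test with $\Phi(t)=t/(t+1)$, obtains via Sobolev a gain $\bigl(\int V^{\delta}\bigr)^{2/2^*}\le C\int V$ with $\delta=2^*/\alpha$, and then runs a separate H\"older-interpolation iteration between $V^{\delta}$ and $V^{1/\beta}$, choosing $\gamma=(\delta\beta-1)/(\beta-1)$; it is precisely here that \eqref{H3} enters (through $V^{1/\beta}\le c_2^{1/\beta}(1+f(Du))$) and here that the bound $\beta<\tfrac{2}{n}\tfrac{\alpha}{\alpha-2}$ is needed to make $\gamma\ge 2^*/2$ and the interpolation close.

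In short: your Step~2 invokes \eqref{H3} too early and your Step~3 assigns to $\beta$ a role (summability of Moser constants) that it does not play. The paper's proof separates the $L^\infty$ upgrade (governed by $\alpha$) from the reduction of the right-hand side to $\int(1+f(Du))$ (governed by $\beta$), and both iterations are needed.
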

	\section{Preliminaries and Notation}
	Given $f:\R^n\rightarrow[0,+\infty)$ convex, we denote with $\partial f(x)$ the subdifferential of $f$ at the point $x$.
	We indicate by $f^*$ the polar function, or Fenchel tranform, of $f$ (see \cite{RW}), defined as
	$$f^*(x):=\sup_{\xi\in \R^n}\{x\cdot\xi-f(\xi)\}, \quad \forall x\in \R^n.$$ If $f$ is differentiable, then $f_{\xi_i}$ denotes the $i$-th component of $Df$. If $f$ is two times idfferentiable, then $f_{\xi_i,\xi_j}$ denotes the $i,j$-component of the Hessian matrix of $f$.
	We denote by $B_R$ a $n$-dimensional ball of radius $R>0$, the center will usually be omitted unless significant at the moment. Given an open and bounded set $S\subset\R^n$, we denote by $W^{1,1}_0(S)$ the set of $W^{1,1}(S)$-functions with null trace on $\partial S$.
	\begin{definition}[Local minimizer]
		\label{local_min}
		We say that $u\in W_{loc}^{1,1}(\Omega )$ is a \textit{local minimizer} of \eqref{functional} if $f(Du) + g(\cdot,u) \in
		L_{loc}^{1}(\Omega )$ and 
		\begin{equation*}
			\int_{\Omega ^{\prime }}f(Du)+g(x,u)\,dx\leq \int_{\Omega ^{\prime }}f(Du+D\varphi
			)+g(x,u+\varphi)\,dx
		\end{equation*}%
		for every open set $\Omega ^{\prime }\Subset
		\Omega $ and for every $\varphi \in W_{0}^{1,1}(\Omega ^{\prime })$.
	\end{definition}
	
	\begin{definition}[BSC]
		\label{bscdefn}
		A function $\phi:\Omega\to\R$ satisfies the \emph{Bounded Slope Condition} of
		rank $m\ge 0$ if for every $\gamma\in\partial\Omega$ there exist
		$z_{\gamma}^-$, $z_{\gamma}^+\in \R^n$ and $m\in\R$ such that
		\begin{equation*}
			\phi(\gamma)+z_{\gamma}^-\cdot(\gamma'-\gamma)\le\phi(\gamma')\le \phi(\gamma)+z_{\gamma}^+\cdot(\gamma'-\gamma)
		\end{equation*}
		and $|z_{\gamma}^{\pm}|\le m$ for every $\gamma\in\partial\Omega$.
	\end{definition}

	\begin{prop}
		\label{polare}
		Let $f:\R^n\rightarrow[0,+\infty)$ be a convex function such that $f\in\mathcal{C}(\mathbb{R}^{n})\cap \mathcal{C}^{2}(\mathbb{R%
		}^{n}\backslash B_{t_{0}}(0))$ for some $t_{0}\geq 0$, superlinear, and satisfies assumption {\rm (F1)} then there exists $s_0\in\R$, such that $f^*\in\mathcal{C}^2(\R^n\setminus B_{s_0}(0))$. Furthermore, given $x\in\R^n\setminus B_{s_0}(0)$, it holds $D^2f^*(x)=\left(D^2f\right)^{-1}(Df^*(x))$ and 
		\begin{equation*}
			\displaystyle\frac{1}{h_{2}\left( \left\vert Df^*(x) \right\vert \right) }\left\vert \lambda
			\right\vert ^{2}\leq \sum_{i,j=1}^{n}f^*_{x _{i}x _{j}}\left( x \right)
			\lambda _{i}\lambda _{j}\leq \frac{1}{h_{1}\left( \left\vert Df^*(x) \right\vert \right)}
			\left\vert \lambda \right\vert ^{2},    
		\end{equation*}
		for every $\lambda ,x \in 
		\mathbb{R}^{n}$ such that $\left\vert x \right\vert {\geq }s_{0}$.
	\end{prop}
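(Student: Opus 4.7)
The statement is the standard Legendre duality result adapted to the fact that $f$ is only $C^2$ outside the ball $B_{t_0}(0)$. The plan is to exploit the classical identity $Df^{\ast}=(Df)^{-1}$ valid on the region where $Df$ is a local diffeomorphism, and then use the inverse function theorem applied to $Df$ on $\R^n\setminus B_{t_0}(0)$.

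\textbf{Steps.} First I would show that there exists $s_0>0$ such that every $x$ with $|x|\ge s_0$ belongs to $Df(\R^n\setminus \overline{B_{t_0}(0)})$ and, moreover, $\partial f^{\ast}(x)$ is a singleton contained in $\R^n\setminus \overline{B_{t_0}(0)}$. This relies on two facts: (i) the superlinearity of $f$ forces $|Df(\xi)|\to+\infty$ as $|\xi|\to+\infty$, so the image of $\{|\xi|>t_0\}$ under $Df$ contains the complement of some large ball; (ii) since $h_1$ is increasing and non-identically zero, for $|\xi|\ge t_0$ large enough, (H1) gives $D^2f(\xi)$ strictly positive definite, hence $Df$ is locally injective on that region, and combined with convexity this yields global injectivity outside a bigger ball.

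Next, from convex analysis I would recall that $\xi\in\partial f^{\ast}(x)$ iff $x\in\partial f(\xi)=\{Df(\xi)\}$ (using differentiability of $f$ outside $B_{t_0}$). Hence on $\R^n\setminus B_{s_0}(0)$ the subdifferential $\partial f^{\ast}(x)$ reduces to the single point $(Df)^{-1}(x)$, proving that $f^{\ast}$ is differentiable there with $Df^{\ast}(x)=(Df)^{-1}(x)$. Applying the inverse function theorem to $Df:\R^n\setminus \overline{B_{t_0}(0)}\to\R^n$ — whose Jacobian $D^2f$ is invertible by (H1) — upgrades this to $Df^{\ast}\in C^1(\R^n\setminus B_{s_0}(0))$ with
\[
D^2f^{\ast}(x)=\bigl(D^2f(Df^{\ast}(x))\bigr)^{-1}.
\]
In particular $f^{\ast}\in C^2(\R^n\setminus B_{s_0}(0))$.

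Finally, the eigenvalue bounds are a linear-algebra consequence: if a symmetric positive-definite matrix $A$ satisfies $h_1 |\lambda|^2\le \langle A\lambda,\lambda\rangle\le h_2|\lambda|^2$, then $A^{-1}$ satisfies $h_2^{-1}|\lambda|^2\le \langle A^{-1}\lambda,\lambda\rangle\le h_1^{-1}|\lambda|^2$. Applying this to $A=D^2f(\xi)$ with $\xi=Df^{\ast}(x)$, (H1) yields the claimed inequality.

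\textbf{Main obstacle.} The computational content is routine once the right domain is identified; the only real subtlety is step one, namely producing an explicit $s_0$ and checking that $(Df)^{-1}$ is well-defined and $C^1$ on $\R^n\setminus B_{s_0}(0)$. Global injectivity of $Df$ on $\{|\xi|>t_0\}$ need not be immediate from local injectivity, so I would use the convexity of $f$ together with the strict monotonicity of $Df$ (guaranteed by the positive lower bound on $D^2f$ far from the origin) to rule out two distinct preimages, combining this with the superlinearity bound to pin down $s_0$.
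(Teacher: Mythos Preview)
Your argument is correct and complete in outline. The only point worth noting is that the paper does not actually give a proof of this proposition: it simply cites \cite[Theorem~13.21]{RW} (Rockafellar--Wets, \emph{Variational Analysis}) and the remark following it. That theorem is precisely the abstract Legendre-duality statement you are reconstructing by hand --- it asserts that when a convex function is $C^2$ with nonsingular Hessian on an open set, the conjugate is $C^2$ on the image of the gradient map, with $D^2f^\ast = (D^2f)^{-1}\circ Df^\ast$. So your approach and the paper's are the same in substance; you supply the details that the paper outsources to the reference.

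One small remark on the subtlety you flag: the cleanest way to get uniqueness of the preimage (i.e.\ that $\partial f^\ast(x)$ is a singleton for $|x|\ge s_0$) is not to argue injectivity of $Df$ directly on the non-convex annulus, but to use that $\partial f^\ast(x)$ is itself a \emph{convex} set. Once you know it lies entirely in the region where $D^2f$ is positive definite, any segment joining two putative maximizers stays in that region, and strict concavity of $\xi\mapsto x\cdot\xi - f(\xi)$ along the segment forces them to coincide. This sidesteps the global-injectivity issue you mention.
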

	\begin{proof}
		It follows from \cite[Theorem 13.21]{RW} and the subsequent observation.     
	\end{proof}
	
	\begin{lemma}[\cite{EPT}, Lemma 2.4]
		\label{Lemmabound}
		Assume that $f_-,\tilde f,f_+:\R^n\rightarrow\R$ are convex and superlinear. Moreover, assume that 
		\begin{equation*}\label{fmenopiù}
			f_-(\xi)\le \tilde f(\xi)\le f_+(\xi).
		\end{equation*}
		Let $g:\Omega\times\R\rightarrow\R$  a Carathéodory function satisfying \textnormal{(G1)}. 
		Let $\varphi$ satisfy the  
		{\rm (BSC)} and $\tilde u$ be a minimizer of 
		\begin{equation*}\label{funzftilde}
			\int_{B_R} \tilde f(D v)+g(x,v)\, dx \qquad v\in\varphi+W^{1,1}_0(B_R).
		\end{equation*}
		Then there exists $\bar K=\bar K(L,f_-,f_+,\|\varphi\|_{L^\infty(B_R)})$
		such that $\|\tilde u\|_{L^{\infty}(B_R)}\le \bar K$. 
	\end{lemma}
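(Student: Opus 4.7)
\noindent\textbf{Proof plan for Lemma \ref{Lemmabound}.}
The strategy is a De Giorgi-type truncation argument at heights $k\ge M_0:=\|\varphi\|_{L^\infty(B_R)}$, using only the sandwich $f_-\le \tilde f\le f_+$, the Lipschitz hypothesis (G1) and the superlinearity of $f_-$; the precise form of $\tilde f$ will play no role in the final bound. Setting $v_k := \min(\tilde u, k)$, the fact that $|\varphi|\le M_0\le k$ in the trace sense on $\partial B_R$ makes $v_k$ an admissible competitor. Minimality of $\tilde u$ combined with (G1) then gives
\begin{equation*}
\int_{\{\tilde u>k\}}\bigl[\tilde f(D\tilde u)-\tilde f(0)\bigr]\,dx\le L\int_{\{\tilde u>k\}}(\tilde u-k)\,dx.
\end{equation*}
To replace the left-hand side by $f_-$-controlled quantities I would use the superlinear lower bound $\tilde f(\xi)\ge f_-(\xi)\ge \lambda|\xi|-C(\lambda)$ (valid for every $\lambda>0$ with $C(\lambda):=\sup_\xi(\lambda|\xi|-f_-(\xi))<\infty$), together with the trivial bound $\tilde f(0)\le f_+(0)$.

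Writing $w_k:=(\tilde u-k)_+\in W^{1,1}_0(B_R)$ and $A(k):=|\{\tilde u>k\}|$, the previous estimate rearranges to
\begin{equation*}
\lambda\int_{B_R}|Dw_k|\,dx\le [C(\lambda)+f_+(0)]\,A(k)+L\int_{B_R}w_k\,dx.
\end{equation*}
The Sobolev embedding $W^{1,1}_0(B_R)\hookrightarrow L^{n/(n-1)}(B_R)$ together with Hölder's inequality yields $\int_{B_R}w_k\le C_n A(k)^{1/n}\int_{B_R}|Dw_k|$; choosing $\lambda:=2LC_n|B_R|^{1/n}$ absorbs the last term to produce $\int_{B_R} w_k\le C^{\ast} A(k)^{1+1/n}$ with $C^{\ast}$ depending only on $L,f_-,f_+,n,|B_R|$. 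For $h>k\ge M_0$, the elementary bound $(h-k)A(h)\le\int_{B_R} w_k$ yields the superlinear De Giorgi recursion
\begin{equation*}
A(h)\le\frac{C^{\ast}}{h-k}\,A(k)^{1+1/n},\qquad h>k\ge M_0,
\end{equation*}
and Stampacchia's iteration lemma, starting from $A(M_0)\le|B_R|$, produces $d>0$ with $A(M_0+d)=0$, i.e.\ $\tilde u\le M_0+d$ almost everywhere. A symmetric argument applied to $v_k:=\max(\tilde u,-k)$ gives the lower bound, so $\bar K:=M_0+d$ works.

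The main obstacle is ensuring that every constant ultimately depends only on $L$, $f_-$, $f_+$ and $\|\varphi\|_{L^\infty(B_R)}$ (together with universal data), since $\tilde f$ can be essentially anything in the sandwich: this forces one to invoke the bounds $f_-\le\tilde f\le f_+$ at every step and to use superlinearity of $f_-$ rather than any quantitative ellipticity of $\tilde f$. The BSC hypothesis does not enter the $L^\infty$ estimate proper; it serves to guarantee existence of $\tilde u$ and the Lipschitz extendibility of $\varphi$ to $B_R$ that makes the Dirichlet problem well posed.
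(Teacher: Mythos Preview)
The paper does not supply its own proof of this lemma: it is quoted verbatim from \cite{EPT}, Lemma~2.4, and invoked later as a black box. There is therefore no in-paper argument to compare against.

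Your proposed argument is correct and self-contained. The De~Giorgi--Stampacchia truncation scheme works exactly as you outline: the competitor $v_k=\min(\tilde u,k)$ is admissible for $k\ge\|\varphi\|_{L^\infty(B_R)}$ because $(\tilde u-k)_+$ has zero trace; minimality plus (G1) gives the level-set energy inequality; the sandwich $f_-\le\tilde f\le f_+$ lets you replace $\tilde f(D\tilde u)-\tilde f(0)$ by $\lambda|D\tilde u|-C(\lambda)-f_+(0)$, with $C(\lambda)<\infty$ by superlinearity and continuity of $f_-$; and the choice $\lambda=2LC_n|B_R|^{1/n}$ absorbs the lower-order term. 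The resulting recursion $A(h)\le C^\ast(h-k)^{-1}A(k)^{1+1/n}$ has exponent $1+1/n>1$, so Stampacchia's lemma applies and yields $A(M_0+d)=0$ for an explicit $d$. The symmetric truncation $\max(\tilde u,-k)$ handles the lower bound.

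Two minor remarks. First, your final constant $\bar K$ also depends on $n$ and $|B_R|$; these are not listed in the lemma's statement but are harmless fixed data in the paper's application (the point there is only uniformity in the approximation parameters $k,\varepsilon$). Second, your closing observation that the BSC is not used in the $L^\infty$ bound itself is accurate: only boundedness of $\varphi$ enters the truncation argument, and BSC serves upstream to guarantee a well-posed Dirichlet problem with a bounded datum.
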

	
	\section{Proof of Lemma \ref{apriori_estimate}}
	Since $u$ is a local minimizer of \eqref{functional}, then on every $\Omega'\Subset \Omega$ it satisfies the Euler-Lagrange equation 
	\begin{equation*}
		\int_\Omega \sum_{i=1}^n f_{\xi_i}(D u)\phi_{x_i}+g_v(x,u)\phi dx=0, \text{ for every }\phi \in W_0^{1,2}(\Omega'),
	\end{equation*}
	where $\phi_{x_i}$ denotes the $i$-th component of $\nabla \phi$. By difference quotients techniques (see \cite[Theorem 1.1', Ch II]{Gia83}), then $u\in W_{loc}^{2,2}(\Omega)$ and the second variation
	\begin{equation}
		\label{EL2}
		\int_\Omega\sum_{i,j=1}^n f_{\xi_i,\xi,j}(D u)u_{x_j,x_k}\phi_{\xi_i}-g_v(x,u)\phi_{x_k}dx=0, \quad k\in\{1,...,n\}
	\end{equation}
	holds for every $\phi\in W_0^{1,2}(\Omega')$. Fix $k\in \{1,...,n\}$ and let $\eta\in C^1_0(\Omega')$ such that $\eta=1$ on $B_\rho$, ${\rm supp}(\eta)\subset B_R$, and $|D \eta|\le \frac{2}{R-\rho}$. Moreover, let $\Phi:[0,\infty)\to[0,\infty)$ be a non-negative, non-decreasing, locally Lipschitz function such that $\Phi(0)=0$. Furthermore, for every $a\in\R$ let $(a)_+:=\max\{a,0\}$. Denote $\varphi:=\eta^2u_{x_k}\Phi(|D u|-1)_+$, where for notation simplicity
	\begin{equation*}
		\Phi(|D u|-1)_+:=\Phi((|D u|-1)_+).
	\end{equation*}
	By construction, $\varphi\in C_0^1(\Omega')$ and 
	\begin{equation*}
		\varphi_{\xi_i}=2\eta\eta_{x_i}u_{x_k}\Phi(|D u|-1)_+ + \eta^2u_{x_k}\Phi'(|D u|-1)_+[(|D u|-1)_+]_{x_i}.
	\end{equation*}
	Plugging $\varphi$ in \eqref{EL2} we get
	\begin{align*}
		&\int_{\Omega }2\eta \Phi (|Du|-1)_{+} \sum_{i,j=1}^{n} \eta _{x_{i}}u_{x_{k}} f_{\xi _{i}\xi
			_{j}}(Du)u_{x_{j}x_{k}} \,dx \\
		& -\int_{\Omega }2\eta \Phi (|Du|-1)_{+}  \eta _{x_{k}}u_{x_{k}} g_u(x,u) \,dx \\
		&+\int_{\Omega }\eta ^{2}\Phi (|Du|-1)_{+}\sum_{i,j=1}^{n}f_{\xi _{i}\xi
			_{j}}(Du)u_{x_{j}x_{k}}u_{x_{i}x_{k}}\,dx \\
		&- \int_{\Omega }\eta^2 \Phi (|Du|-1)_{+} u_{x_{k} x_k} g_u(x,u) \,dx \\
		&+\int_{\Omega }\eta ^{2}\Phi ^{\prime }(|Du|-1)_{+}\sum_{i,j=1}^{n}f_{\xi
			_{i}\xi _{j}}(Du)u_{x_{j}x_{k}}u_{x_{k}}[(|Du|-1)_{+}]_{x_{i}}\,dx \\
		&-\int_{\Omega }\eta ^{2}\Phi ^{\prime }(|Du|-1)_{+} g_u(x,u) u_{x_{k}}[(|Du|-1)_{+}]_{x_{k}}\,dx \\
		&= I_{1k} - I_{2k} + I_{3k} - I_{4k} + I_{5k} - I_{6k}=0.
	\end{align*}
	Summing for $k\in\{1,...,n\}$, we get
	\begin{equation}\label{Eq delle I}
		I_3+I_5\le |I_1|+|I_2|+|I_4|+|I_6|.
	\end{equation}
	Now we estimate each term separately. We start from $I_1$, for which we use, in order, Cauchy-Schwartz inequality, Young inequality, and \eqref{H1}.
	\begin{align}
		\label{I_1}
		&|I_{1}| = \left | \int_{\Omega }2\eta \Phi (|Du|-1)_{+}\sum_{i,j, k=1}^{n}f_{\xi
			_{i}\xi _{j}}(Du)u_{x_{j}x_{k}}\eta _{x_{i}}u_{x_{k}}\,dx\right| \nonumber\\
		&\leq \int_{\Omega }2\Phi (|Du|-1)_{+}\left( \eta
		^{2}\sum_{i,j, k=1}^{n}f_{\xi _{i}\xi
			_{j}}(Du)u_{x_{i}x_{k}}u_{x_{j}x_{k}}\right) ^{\frac{1}{2}}\nonumber \\
		& \times \left(
		\sum_{i,j, k=1}^{n}f_{\xi _{i}\xi _{j}}(Du)\eta _{x_{i}}u_{x_{k}}\eta
		_{x_{j}}u_{x_{k}}\right) ^{\frac{1}{2}}\,dx \nonumber\\
		& \leq \frac{1}{2}\int_{\Omega }\eta ^{2}\Phi
		(|Du|-1)_{+}\sum_{i,j, k=1}^{n}f_{\xi _{i}\xi
			_{j}}(Du)u_{x_{i}x_{k}}u_{x_{j}x_{k}}\,dx \nonumber \\
		& +2\int_{\Omega }\Phi (|Du|-1)_{+}\sum_{i,j, k=1}^{n}f_{\xi _{i}\xi
			_{j}}(Du)\eta _{x_{i}}u_{x_{k}}\eta _{x_{j}}u_{x_{k}}\,dx\nonumber\\
		&\leq \frac{1}{2}\int_{\Omega }\eta ^{2}\Phi
		(|Du|-1)_{+}\sum_{i,j, k=1}^{n}f_{\xi _{i}\xi
			_{j}}(Du)u_{x_{i}x_{k}}u_{x_{j}x_{k}}\,dx \nonumber\\
		&+2\int_\Omega \eta ^{2}\Phi
		(|Du|-1)_{+} \sum_{k=1}^n u_{x_k}^2 h_2(|D u|)|D \eta|^2dx\nonumber\\
		&\leq \frac{1}{2}\int_{\Omega }\eta ^{2}\Phi
		(|Du|-1)_{+}\sum_{i,j, k=1}^{n}f_{\xi _{i}\xi
			_{j}}(Du)u_{x_{i}x_{k}}u_{x_{j}x_{k}}\,dx \nonumber\\
		&2\int_\Omega \eta ^{2}\Phi
		(|Du|-1)_{+} |D \eta|^2 h_2(|D u|)|D u|^2dx\nonumber\\
		&\leq \frac{1}{2}\int_{\Omega }\eta ^{2}\Phi
		(|Du|-1)_{+}\sum_{i,j, k=1}^{n}f_{\xi _{i}\xi
			_{j}}(Du)u_{x_{i}x_{k}}u_{x_{j}x_{k}}\,dx \nonumber\\
		&2\int_\Omega \eta ^{2}\Phi
		(|Du|-1)_{+} |D \eta|^2 h_2(|D u|)|D u|^2dx\nonumber\\
		&\leq \frac{1}{2}\int_{\Omega }\eta ^{2}\Phi
		(|Du|-1)_{+}\sum_{i,j, k=1}^{n}f_{\xi _{i}\xi
			_{j}}(Du)u_{x_{i}x_{k}}u_{x_{j}x_{k}}\,dx \nonumber\\
		&2\int_\Omega \eta ^{2}\Phi
		(|Du|-1)_{+} |D \eta|^2 h_2(1+(|D u|-1)_+)(1+(|D u|-1)_+)^2dx
	\end{align}
	For $I_2$ we use, in order, \eqref{G1}, Young inequality, and the fact $h_2(t)\ge h_1(t)\ge h_1(1)/t$ for $t\ge 1$.
	\begin{align}\label{I_2}
		|I_{2}| & = \left\vert \int_{\Omega }  2\eta \Phi (|Du|-1)_{+} \sum_{k=1}^n \eta _{x_{k}}u_{x_{k}} g_u(x,u) \,dx
		\right\vert\nonumber\\
		& \le 2L\int_\Omega |Du||D\eta||\eta|\Phi (|Du|-1)_{+}dx\nonumber\\
		&\leq \, L  \int_{\Omega} (\eta^2 + |D \eta|^2) |Du| \,  \Phi (|Du|-1)_{+} \, dx\nonumber\\
		& \leq \, \frac{L}{h_1(1)} \int_{\Omega} (\eta^2 + |D \eta|^2) \Phi (|Du|-1)_{+} h_2(|Du|) \, |Du|^2 \, dx\nonumber\\
		&\le \frac{L}{h_1(1)} \int_{\Omega} (\eta^2 + |D \eta|^2) \Phi (|Du|-1)_{+} h_2(1 + (|Du|-1)_+) \, (1 + (|Du|-1)_+)^2  \, dx.
	\end{align}

	For $|I_4|$ we have
	\begin{align}
		\label{I_4}
		|I_4| & = \left\vert \int_{\Omega }  \eta^2 \Phi (|Du|-1)_{+} \sum_{k=1}^n u_{x_{k} x_k}  g_u(x,u) \,dx
		\right\vert \nonumber \\
		& \le nL\int_\Omega \eta^2  \Phi (|Du|-1)_{+} |D^2|^2dx \nonumber\\
		& \leq \, \frac{nL}{h_1(1)} \int_{\Omega} \left [\eta^2 |D^2u|^{2}  \Phi (|Du|-1)_{+} h_1(1 + (|Du| - 1)_+)\right ]^{\frac{1}{2}} \nonumber \\
		& \quad \times \left [\eta^2 \Phi (|Du|-1)_{+} h_2(1 + (|Du| - 1)_+) (1 + (|Du|-1)_+))^2\right ]^{\frac{1}{2}} \, dx \nonumber\\
		&  \leq \, \varepsilon \, \int_{\Omega} \eta^2  \Phi (|Du|-1)_{+} h_1(1 + (|Du| - 1)_+) |D^2 u|^{2} \, dx \nonumber\\
		& \quad + \frac{n^2 L^2}{4 h_1(1)^2 \varepsilon}  \int_{\Omega} \eta^2 \Phi (|Du|-1)_{+} h_2(1 + (|Du| - 1)_+) (1 + (|Du|-1)_+)^2 \, dx.
	\end{align}
	Where we used the fact that 
	\begin{equation}
		\label{g1g2}
		h_1(t) \, h_2(t) \, t^2\ge h_1(t)^2t^2 \ge h_1(1)^2, \quad \text{ for } t\ge 1.
	\end{equation}
	Before estimating $I_5$, we observe that
	\begin{equation*}
		[(|Du|-1)_+]_{x_i}=
		\begin{cases}
			\frac{1}{|Du|}\sum_{k=1}^n u_{x_i,x_k} u_{x_k} \quad \textbf{if } |Du|>1\\
			0 \qquad \text{if }|Du|\le 1
		\end{cases}
	\end{equation*}
	so for $I_5$ we obtain
	\begin{align}
		\label{I_5}
		I_5 = & \int_{\Omega} \eta^2 \Phi'(|Du|-1)_{+} \sum_{i,j,k=1}^{n} f_{\xi _{i}\xi
			_{j}}(Du)u_{x_{j}x_{k}}u_{x_{k}}[(|Du|-1)_{+}]_{x_{i}} \, dx\nonumber \\
		= & \int_{\Omega} \eta^2 \Phi'(|Du|-1)_{+} |Du|%
		\sum_{i,j=1}^{n}f_{\xi _{i}\xi
			_{j}}(Du)[(|Du-1|)_{+}]_{x_{j}}[(|Du|-1)_{+}]_{x_{i}} \, dx.
	\end{align}
	
	Finally, we estimate $|I_6|$ as
	\begin{align}
		\label{I_6}
		|I_{6}| & = \left\vert \int_{\Omega }\eta ^{2}\Phi ^{\prime }(|Du|-1)_{+} g_u(x,u) \sum_{k=1}^n u_{x_{k}}[(|Du|-1)_{+}]_{x_{k}}\,dx
		\right\vert \nonumber\\
		& \leq  \, L \int_{\Omega} \eta ^{2}\Phi ^{\prime }(|Du|-1)_{+} (1 + (|Du|-1)_+) |D(|Du|-1)_{+}|\,dx\nonumber\\
		& {\leq} \, \frac{L}{h_1(1)} \int_{\Omega} \eta ^{2}\Phi ^{\prime }(|Du|-1)_{+} (1 + (|Du|-1)_+) |D(|Du|-1)_{+}|\nonumber \\ 
		& \quad \times \sqrt{h_1(1 + (|Du|-1)_+) \, h_2(1 + (|Du| -1)_+)} (1 + (|Du| - 1)_+)\,dx\nonumber\\
		& \leq  \, \frac{L}{h_1(1)} \int_{\Omega} \Big [\eta^2 |D(|Du|-1)_{+}|^{2} \Phi ^{\prime }(|Du|-1)_{+} (1 + (|Du|-1)_+)^2\nonumber\\
		& \quad \times h_1(1 + (|Du| - 1)_+)\Big ]^{\frac{1}{2}} \Big [\eta^2 \Phi ^{\prime }(|Du|-1)_{+} \nonumber \\
		& \times h_2(1 + (|Du| - 1)_+) (1 + (|Du|-1)_+)^2\Big ]^{\frac{1}{2}} \, dx\nonumber\\
		&  \leq  \varepsilon \, \int_{\Omega} \eta^2  \Phi ^{\prime }(|Du|-1)_{+} (1 + (|Du|-1)_+)^2 h_1(1 + (|Du| - 1)_+) |D(|Du|-1)_{+}|^{2} \, dx \nonumber\\
		& \quad + \, \frac{L^2}{4 h(1)^2 \varepsilon} \int_{\Omega} \eta^2 \Phi ^{\prime }(|Du|-1)_{+} h_2(1 + (|Du| - 1)_+) (1 + (|Du|-1)_+)^2 \, dx, 
	\end{align}
	where in the second inequality we used \eqref{g1g2}. Plugging \eqref{I_1},\eqref{I_2},\eqref{I_4},\eqref{I_5}, and \eqref{I_6} in \eqref{Eq delle I} we get
	\begin{align*}
		& \int_{\Omega }\eta ^{2}\Phi (|Du|-1)_{+} \sum_{i,j,k=1}^{n} f_{\xi _{i}\xi
			_{j}}(Du)u_{x_{j}x_{k}}u_{x_{i}x_{k}}\,dx \nonumber \\
		& + \int_{\Omega} \eta^2 \Phi'(|Du|-1)_{+} |Du| \sum_{i,j =1}^{n} f_{\xi _{i}\xi
			_{j}}(Du)[(|Du-1|)_{+}]_{x_{j}}[(|Du|-1)_{+}]_{x_{i}} \, dx \nonumber\\
		\le & \frac{1}{2}\int_{\Omega }\eta ^{2}\Phi
		(|Du|-1)_{+}\sum_{i,j, k=1}^{n}f_{\xi _{i}\xi
			_{j}}(Du)u_{x_{i}x_{k}}u_{x_{j}x_{k}}\,dx \nonumber \\
		& + 2 \int_{\Omega } |D \eta|^2 \, \Phi (|Du|-1)_{+} h_2(1 + (|Du|-1)_+) \, (1 + (|Du|-1)_+)^2 \,dx \nonumber\\
		& + \frac{L}{h_1(1)} \int_{\Omega} (\eta^2 + |D \eta|^2) \Phi (|Du|-1)_{+} h_2(1 + (|Du|-1)_+) \, (1 + (|Du|-1)_+)^2  \, dx \nonumber\\
		& + \varepsilon \, \int_{\Omega} \eta^2  \Phi (|Du|-1)_{+} h_1(1 + (|Du| - 1)_+) |D^2 u|^{2} \, dx \nonumber\\
		&+ \frac{n^2L^2}{4 h(1)^2 \varepsilon}   \int_{\Omega} \eta^2 \Phi (|Du|-1)_{+} h_2(1 + (|Du| - 1)_+) (1 + (|Du|-1)_+)^2 \, dx \nonumber \\
		& +  \varepsilon \, \int_{\Omega} \eta^2  \Phi ^{\prime }(|Du|-1)_{+} (1 + (|Du|-1)_+) h_1(1 + (|Du| - 1)_+) |D(|Du|-1)_{+}|^{2} \, dx \nonumber \\
		& + \frac{L^2}{4 h(1)^2 \varepsilon}  \int_{\Omega} \eta^2 \Phi ^{\prime }(|Du|-1)_{+} h_2(1 + (|Du| - 1)_+) (1 + (|Du|-1)_+)^2 \, dx.
	\end{align*}
	Now we absorb the first term in the right-hand side into the left hand-side of the inequality and we obtain
	\begin{align}
		\label{asterisco}
		& \frac{1}{2}\int_{\Omega }\eta ^{2}\Phi (|Du|-1)_{+} \sum_{i,j,k=1}^{n} f_{\xi _{i}\xi
			_{j}}(Du)u_{x_{j}x_{k}}u_{x_{i}x_{k}}\,dx \nonumber \\
		& + \int_{\Omega} \eta^2 \Phi'(|Du|-1)_{+} |Du| \sum_{i,j =1}^{n} f_{\xi _{i}\xi
			_{j}}(Du)[(|Du-1|)_{+}]_{x_{j}}[(|Du|-1)_{+}]_{x_{i}} \, dx \nonumber\\
		& \le  \varepsilon \, \int_{\Omega} \eta^2  \Phi ^{\prime }(|Du|-1)_{+} (1 + (|Du|-1)_+) h_1(1 + (|Du| - 1)_+) |D(|Du|-1)_{+}|^{2} \, dx \nonumber \\
		& + \varepsilon \, \int_{\Omega} \eta^2  \Phi (|Du|-1)_{+} h_1(1 + (|Du| - 1)_+) |D^2 u|^{2} \, dx \nonumber\\
		& + \frac{C}{\varepsilon} \int_{\Omega} (\eta^2 + |D \eta|^2) \Phi (|Du|-1)_{+} h_2(1 + (|Du|-1)_+) \, (1 + (|Du|-1)_+)^2  \, dx \nonumber\\
		&+ \frac{C}{\varepsilon}   \int_{\Omega} \eta^2 \Phi' (|Du|-1)_{+} h_2(1 + (|Du| - 1)_+) (1 + (|Du|-1)_+)^2 \, dx,
	\end{align}
	where $C$ is a constat depending on $n, L$, and $h_1(1)$. In what follows the constant $C$ may change from line to line, but it will always depend only on $n, L$, and $h_1(1)$ while $c$ will denote a generic positive constant. Using hypothesis \eqref{H1} and the inequality
	\begin{equation*}
		|D(|Du|-1)_+|^2\le |D^2 u|^2,
	\end{equation*}
	we get 
	\begin{align}
		\label{disug che non capivo}
		& \frac{1}{2}\int_{\Omega }\eta ^{2}\Phi (|Du|-1)_{+} \sum_{i,j,k=1}^{n} f_{\xi _{i}\xi
			_{j}}(Du)u_{x_{j}x_{k}}u_{x_{i}x_{k}}\,dx \nonumber \\
		& + \int_{\Omega} \eta^2 \Phi'(|Du|-1)_{+} |Du| \sum_{i,j =1}^{n} f_{\xi _{i}\xi
			_{j}}(Du)[(|Du-1|)_{+}]_{x_{j}}[(|Du|-1)_{+}]_{x_{i}} \, dx \nonumber\\
		& \ge c \int_\Omega \eta^2 \left(\Phi (|Du|-1)_{+} + |Du|\Phi' (|Du|-1)_{+}\right) h_1((|Du|-1)_{+})|D(|Du|-1)_+|^2dx .    \end{align}
	
	Assuming $\varepsilon$ small enough and combining \eqref{disug che non capivo} with \eqref{asterisco} we get
	
	\begin{align}
		\label{si usa?2}
		& \int_\Omega\eta^2 \left(\Phi (|Du|-1)_{+} + |Du|\Phi' (|Du|-1)_{+}\right) h_1((|Du|-1)_{+})|D(|Du|-1)_+|^2dx\nonumber\\
		& \le C\int_\Omega (\eta^2+ |D\eta|^2) \Phi (|Du|-1)_{+} h_2(1+(|Du|-1)_{+})(1 + (|Du|-1)_+)^2 dx\nonumber\\
		& + C\int_\Omega (\eta^2 +  |D\eta|^2) \Phi' (|Du|-1)_{+} h_2(1+(|Du|-1)_{+})(1 + (|Du|-1)_+)^2 dx,
	\end{align}
	where we absorbed the first two terms of the right-hand side of  \eqref{asterisco} into the left-hand side. Moreover, we can further estimate the left-hand side of the previous inequality with
	\begin{align*}
		& \int_\Omega\eta^2 \left(\Phi (|Du|-1)_{+} + |Du|\Phi' (|Du|-1)_{+}\right) h_1(1+(|Du|-1)_{+})|D(|Du|-1)_+|^2dx\\
		& \ge \int_\Omega\eta^2 \Phi (|Du|-1)_{+} h_1(1+(|Du|-1)_{+})|D(|Du|-1)_+|^2dx,
	\end{align*}
	which leads to 
	\begin{align}
		\label{si usa?}
		&\int_\Omega\eta^2 \Phi (|Du|-1)_{+} h_1(1+(|Du|-1)_{+})|D(|Du|-1)_+|^2dx\nonumber\\
		& \le C\int_\Omega (\eta^2+ |D\eta|^2) \Phi (|Du|-1)_{+} h_2(1+(|Du|-1)_{+})(1 + (|Du|-1)_+)^2 dx\nonumber\\
		& + C\int_\Omega (\eta^2 +  |D\eta|^2) \Phi' (|Du|-1)_{+} h_2(1+(|Du|-1)_{+})(1 + (|Du|-1)_+)^2 dx.
	\end{align}
	Now we denote
	\begin{equation*}
		G(t)=1+\int_0^t\sqrt{\Phi(s)h_1(1+s)}ds,
	\end{equation*}
	and since $G$ is positive for $t\ge 0$ we denote
	\begin{equation*}
		G(t)_+:=G(t_+), \quad \text{for } t\in\R.
	\end{equation*}
	Since $\Phi$ is monotone and $t\mapsto th_1(t)$ is increasing, then we have 
	\begin{align*}
		&G(t)=1+\int_0^t\sqrt{\Phi(s)h_1(1+s)}ds\nonumber\\
		&\le 1+t\sqrt{\Phi(t)h_1(1+t)}\nonumber\\
		&\le 1+(t+1)\sqrt{\Phi(t)h_1(1+t)}
	\end{align*}
	which leads to
	\begin{equation*}
		G(t)^2\le 8\left(1+\Phi(t)(1+t)^2h_2(1+t)\right).
	\end{equation*}
	Moreover,
	\begin{align}
		\label{stimaDG^2}
		&|D [\eta \, G(|Du|-1)_{+}]|^{2} \nonumber\\
		\leq & \,2\,|D\eta |^{2}[G((|Du|-1)_{+})]^{2}+2\eta ^{2}[G^{\prime
		}((|Du|-1)_{+})]^{2}\,|D((|Du|-1)_{+})|^{2}\nonumber \\
		\leq & 16\,|D\eta |^{2}\, \left [1+\Phi (|Du|-1)_{+}\,h_{2}(1+(|Du|-1)_{+})(1+(|Du|-1)_{+})^{2} \right ]\nonumber\\
		& +2\,\eta
		^{2}\,\Phi (|Du|-1)_{+}h_{1}(1+(|Du|-1)_{+})\,|D(|Du|-1)_{+}|^{2}.
	\end{align}
	Since $\Phi (|Du|-1)_{+}=0$ on $\{|Du|\le 1\}$, then by \eqref{si usa?} and by \eqref{stimaDG^2} we get
	\begin{align}
		\label{piu}
		&\int_\Omega |D [\eta \, G(|Du|-1)_{+}]|^{2} dx\nonumber\\
		& \le C\int_\Omega (\eta^2+ |D\eta|^2)\left(1+\Phi (|Du|-1)_{+} h_2(1+(|Du|-1)_{+})(1 + (|Du|-1)_+)^2\right)  dx\nonumber\\
		& + C\int_\Omega (\eta^2 +  |D\eta|^2) \left(1+\Phi' (|Du|-1)_{+} h_2(1+(|Du|-1)_{+})(1 + (|Du|-1)_+)^2\right) dx.
	\end{align}
	Furthermore, by Sobolev inequality we have 
	\begin{align}
		\label{meno}
		\left[\int_\Omega|\eta \, G(|Du|-1)_{+}|^{2^*}dx\right]^{\frac{2}{2^*}}\le C_S \int_\Omega|D(\eta \, G(|Du|-1)_{+})|^{2}dx.
	\end{align}
	Let $\Phi(t)=(t+1)^{2\gamma-2}t^2$ with $\gamma\ge 1$. Since $\Phi$ is non-decreasing and $h_1$ is increasing then it follows that for $t\ge 1$
	\begin{align}
		\label{stella}
		G(t)&=1+\int_0^t(1+s)^{\gamma-1}s\sqrt{h_1(1+s)}ds\nonumber\\
		&\ge 1 + \frac{1}{t}\int_0^t(s+1)^{\gamma-1}sds\int_0^t\sqrt{h_1(1+s)}ds\nonumber\\
		&= 1+\frac{1}{t}\left[\frac{(t+1)^{\gamma-1}(t(\gamma-1)+1)-1}{\gamma(\gamma-1)}\right]\int_0^t\sqrt{h_1(1+s)}ds\nonumber\\
		&\ge 1+\frac{1}{t}\left[\frac{(t+1)^{\gamma-1}t(\gamma-1)}{\gamma(\gamma-1)}\right]\int_0^t\sqrt{h_1(1+s)}ds\nonumber\\
		& = 1+\frac{(t+1)^{\gamma-1}}{\gamma}\int_0^t\sqrt{h_1(1+s)}ds\nonumber\\
		& \ge 1+\frac{t^{\gamma-1}}{\gamma}\int_0^t\sqrt{h_1(1+s)}ds
	\end{align}
	where in first inequality we used \cite[Lemma 3.4-(v)]{Marc93}. Since $h_1$ is increasing and not identically zero, then we can assume $h_1(t)>0$ for $t>1$. Moreover, let $c_3:=\int_0^1 \sqrt{h_1(1+s)}ds$ (we recall that $c_1$ and $c_2$ were already defined in \eqref{H2} and \eqref{H3} respectively), then for every $t\ge 1$ we have
	\begin{align*}
		2\int_0^t \sqrt{h_1(1+s)}ds&\ge c_3+\int_0^t \sqrt{h_1(1+s)}ds\\
		&\ge \min\{c_3,1\}\left(1+\int_0^t \sqrt{h_1(1+s)}ds\right).
	\end{align*}
	By \eqref{H2} and the previous inequality, we get
	\begin{equation*}
		\left(\int_0^t \sqrt{h_1(1+s)}ds\right)^\alpha\ge c_4 h_2(t)t^2, \quad t\ge 1,
	\end{equation*}
	for some positive $c_4$ depending on $c_1$ and $c_3$. By hypothesis \eqref{H2} it is $\alpha\in [2,2^+)$, so for $t\ge 1$ we have
	\begin{align*}
		\left(\int_0^t \sqrt{h_1(1+s)}ds\right)^{2^*}&\ge \left(\int_0^t \sqrt{h_1(1+s)}ds\right)^{\alpha}\left(h_1(1)(t-1)+c_3\right)^{2^*-\alpha}\nonumber\\
		& \ge{\min\{c_5,c_3\}^{2^*-\alpha}c_4h_2(t)t^{2^*-\alpha+2}},
	\end{align*}
	\color{black}
	with $c_5:=h_1(1).$\\
	From the previous inequality and \eqref{stella} we get for $t\ge 1$
	\begin{align*}
		G(t)^{2^*}&\ge \frac{1}{2}\left[1+\left(\frac{t^{\gamma-1}}{\gamma}\int_0^t\sqrt{h_1(1+s)}ds\right)^{2^*}\right]\nonumber\\
		& \ge \frac{1}{2} + \min\{c_5,c_3\}^{2^*-\alpha}\frac{c_4}{2\gamma^{2^*}}t^{2^*\gamma-\alpha+2}h_2(t)
	\end{align*}
	\color{black}
	which means that there exists a constant $c_6$ depending on $c_1,c_3,c_4,c_5,$ and $\alpha$ such that for $t\ge 1$
	\begin{align}
		\label{disugG}
		c_6\gamma^{2^*}G(t)^{2^*}&\ge 1+t^{2^*\gamma-\alpha+2}h_2(t).
	\end{align}
	Since $\Phi(t)=(t+1)^{2\gamma-2}t^2$, then $\Phi'(t)=2(1+t)^{2\gamma-3}t(1+\gamma)$. So,
	\begin{equation*}
		\Phi(t)\le (t+1)^{2\gamma}, \quad \Phi'(t)\le 2(1+\gamma) (1+t)^{2\gamma}.
	\end{equation*}

	Plugging these functions in \eqref{piu} we get
	\begin{align*}
		& \int_{\Omega }|D(\eta \,G(|Du|-1)_{+}|^{2}\,dx \\
		\le & 2(\gamma+1) C \int_\Omega (\eta^2+ |D\eta|^2)\left(1+ (1+(|Du|-1)_{+})^{2(\gamma+1)} h_2(1+(|Du|-1)_{+})\right)  dx.
	\end{align*}
	From the previous estimate, \eqref{meno}, and \eqref{disugG} we obtain
	\begin{align*}
		&\left( \int_\Omega\eta^2(1+(1+(|Du|-1)_+))^{2^*\gamma-\alpha+2}h_2(1+(|Du|-1)_+) dx  \right)^{\frac{2}{2^*}}\\
		& \le C_6(\gamma^4(\gamma+1)) \int_\Omega (\eta^2+ |D\eta|^2)\left(1+ (1+(|Du|-1)_{+})^{2(\gamma+1)} h_2(1+(|Du|-1)_{+})\right)  dx,\\
		& \le C_6(\gamma+1)^5\int_\Omega (\eta^2+ |D\eta|^2)\left(1+ (1+(|Du|-1)_{+})^{2(\gamma+1)} h_2(1+(|Du|-1)_{+})\right)  dx,
	\end{align*}
	with $C_6$ depending on $C,c_1,c_3,c_4,c_5,c_6,C_S$, and $\alpha$. By the properties of $\eta$, we get
	\begin{align}
		\label{asterisco2}
		&\left( \int_{B_\rho}1+(1+(|Du|-1)_+)^{2^*\gamma-\alpha+2}h_2(1+(|Du|-1)_+) dx  \right)^{\frac{2}{2^*}}\nonumber\\
		&\le \frac{C_6 2(\gamma+1)^5}{(R-\rho)^2} \int_{B_R} 1+ (1+(|Du|-1)_{+})^{2(\gamma+1)} h_2(1+(|Du|-1)_{+})  dx.
	\end{align}
	Now we apply the same iterative argument as in \cite{ EMMP, EPT, Marc96}. Fix $\bar \rho$ and $\bar R$ such that $\bar \rho<\bar R$ and let $\rho_i:=\bar \rho +\frac{\bar R - \bar \rho}{2^i}$ for $i\in \N_0$ so that $\rho_0=\bar R$ and $\lim_{i\to \infty}\rho_i=\bar \rho$. Let
	\begin{equation*}
		\delta_0=2, \quad \delta_i=\frac{2^*}{2}\delta_{i-1} -\frac{\alpha}{2},
	\end{equation*}
	so that
	\begin{equation*}
		2^*\delta_i-\alpha+2=2(\delta_{i+1}+1),
	\end{equation*}
	which implies that $\delta_i$ is increasing for $i\in\N$, $\lim_{i\to \infty}\delta_i=+\infty$, and $\delta_i\ge 1$ for $i$ large enough. Fixed $i\in \N$, we choose $\gamma=\delta_i$, $R=\rho_i$, and $\rho=\rho_{i+1}$ in \eqref{asterisco2} and we get
	\begin{align*}
		&\left( \int_{B_{\rho_{i+1}}}1+(1+(|Du|-1)_+)^{2^*\delta_{i}-\alpha+2}h_2(1+(|Du|-1)_+) dx  \right)^{\frac{2}{2^*}}\nonumber\\
		&\le \frac{C_6 2^i(\gamma+1)^5}{(R-\rho)^2} \int_{B_{\rho_{i}}} 1+ (1+(|Du|-1)_{+})^{2(\delta_i+1)} h_2(1+(|Du|-1)_{+})  dx.\\
		&= \frac{C_6 2^i(\gamma+1)^5}{(R-\rho)^2} \int_{B_{\rho_{i}}} 1+ (1+(|Du|-1)_{+})^{2^*\delta_{i-1}-\alpha+2} h_2(1+(|Du|-1)_{+})  dx.
	\end{align*}
	Iterating the inequality another $i-1$ times we get
	\begin{align*}
		&\left( \int_{B_{\rho_{i+1}}}1+(1+(|Du|-1)_+)^{2(\delta_{i}+1)}h_2(1+(|Du|-1)_+) dx  \right)^{\left(\frac{2}{2^*}\right)^i}\nonumber\\
		&\le C_7\int_{B_{\rho_{i}}} 1+ (1+(|Du|-1)_{+})^{2} h_2(1+(|Du|-1)_{+})  dx,
	\end{align*}
	with
	\begin{align*}
		C_7=\prod_{k=1}^\infty\left( \frac{(\delta_k+1)^52^kC_6}{(\bar R-\bar \rho)^2}\right)^{(\frac{2}{2^*})^k}.
	\end{align*}
	It is not difficult to prove that
	\begin{equation}
		\label{stimedeltak}\delta_k=2^*\left(\frac{2^*}{2}\right)^k-\frac{\alpha}{2}\left(\frac{\left(\frac{2^*}{2}\right)^k-1}{2^*-2}\right) \le  \left(\frac{2^*}{2}\right)^k\left(\frac{\alpha/2+1}{2^*-2}\right), \quad k\in\N.
	\end{equation}
	Let $c_\alpha:=\frac{\alpha/2+1}{2^*-2}$ and $C_\alpha:=(c_\alpha+1)^5$, then it follows that
	\begin{align*}
		C_7& \le \prod_{k=1}^\infty\left( \frac{\left(\left(\frac{2^*}{2}\right)^kc_\alpha+1\right)^52^kC_6}{(\bar R-\bar \rho)^2}\right)^{(\frac{2}{2^*})^k}\\
		& \le \prod_{k=1}^\infty\left( \frac{\left(\frac{2^*}{2}\right)^{5k}\left(c_\alpha+1\right)^52^kC_6}{(\bar R-\bar \rho)^2}\right)^{(\frac{2}{2^*})^k}\\
		& = \prod_{k=1}^\infty\left( \frac{\left(\frac{2^*}{2}\right)^{5k}C_\alpha 2^kC_6}{(\bar R-\bar \rho)^2}\right)^{(\frac{2}{2^*})^k}\\
		& \le \prod_{k=1}^\infty\left( \frac{(2^*)^{5k}C_\alpha C_6}{(\bar R-\bar \rho)^2}\right)^{(\frac{2}{2^*})^k}\\
		&=\prod_{k=1}^\infty\left(\frac{C_6'}{(\bar R-\bar \rho)^2}\right)^{(\frac{2}{2^*})^k}(2^*)^{5k(\frac{2}{2^*})^k}\\
		&\le\left(\frac{C_6'}{(\bar R-\bar \rho)^2}\right)^{\sum_{k=1}^\infty(\frac{2}{2^*})^k}(2^*)^{\sum_{k=1}^\infty 5k(\frac{2}{2^*})^k},
	\end{align*}
	with $C_6':=C_\alpha C_6$.  Now we observe that
	\begin{equation*}
		\sum_{k=1}^\infty\left(\frac{2}{2^*}\right)^k=\frac{2^*}{2^*-2}, \quad \sum_{k=1}^\infty 5k\left(\frac{2}{2^*}\right)^k=c<\infty,
	\end{equation*}
	which leads to $C_7\le\frac{C_8}{(\bar R-\bar \rho)^{\frac{22^*}{2^*-2}}},$ with $C_8=c(C_6')^{\frac{2^*}{2^*-2}}$, and 
	\begin{align*}
		&\left( \int_{B_{\rho_{i+1}}}1+(1+(|Du|-1)_+)^{2(\delta_{i+1}+1)}h_2(1+(|Du|-1)_+) dx  \right)^{(\frac{2}{2^*})^i}\nonumber\\
		&\le \frac{C_8}{(\bar R-\bar \rho)^{\frac{22^*}{2^*-2}}}\int_{B_{\bar R}} 1+ (1+(|Du|-1)_{+})^{2} h_2(1+(|Du|-1)_{+})  dx.
	\end{align*}
	By the properties of $h_2$, we have that for $0\le r\le s$ and for every $t\ge 1$ it holds
	\begin{align*}
		t^r&\le 1+t^sh_2(t)\le \frac{t^sh_2(1)}{h_2(1)}+t^sh_2(t)\\
		&\le \frac{t^sh_2(t)}{h_2(1)}+t^sh_2(t)=\left(1+\frac{1}{h_2(1)}\right)t^sh_2(t),
	\end{align*}
	which leads to
	\begin{align*}
		&\left( \int_{B_{\rho_{i+1}}}(1+(|Du|-1)_+)^{2(\delta_{i+1}+1)} dx  \right)^{(\frac{2}{2^*})^i}\nonumber\\
		&\le \frac{C_9}{(\bar R-\bar \rho)^{\frac{22^*}{2^*-2}}}\int_{B_{\bar R}}(1+(|Du|-1)_{+})^{2} h_2(1+(|Du|-1)_{+})  dx.
	\end{align*}
	for some positive constant $C_9$ proportional to $C_8$.
	Now we observe that by \eqref{stimedeltak} we have
	\begin{equation*}
		\lim_{i\to \infty}2(\delta_{i+1}+1)\left(\frac{2}{2^*}\right)^i=22^*-\frac{\alpha}{2^*-2}.
	\end{equation*}
	So, passing to the limit for $i\to \infty$ we get
	\begin{align}
		\label{apriori1}
		&||1+(|Du|-1)_+||_{L^\infty(B_{\bar \rho})}^{22^*-\frac{\alpha}{2^*-2}}\nonumber\\
		& \le\limsup_{i\to \infty} \left(\int_{B_{\rho_{i+1}}}(1+(|Du|-1)_{+})^{2(\delta_{i+1}+1)} h_2(1+(|Du|-1)_{+})  dx\right)^{(\frac{2}{2^*})^i}\nonumber\\
		& \le \frac{C_9}{(\bar R-\bar \rho)^{\frac{22^*}{2^*-2}}}\int_{B_{\bar R}}(1+(|Du|-1)_{+})^{2} h_2(1+(|Du|-1)_{+})  dx.
	\end{align}
	Let now $H:[0,+\infty)\to \R$ be defined as
	\begin{equation*}
		H(t):=1+\int_0^t\frac{s}{s+1}\sqrt{h_1(1+s)}ds.
	\end{equation*}
	Since $h_1$ is increasing, then for $t\ge 1$
	\begin{align*}
		H(t)^2&\le \left(1+t\sqrt{h_1(1+t)}\right)^2\\
		&\le 2 \left(1+t^2h_1(1+t)\right)\le 2 \left(1+t^2h_2(1+t)\right)\\
		&\le 4 (1+t)^2h_2(1+t).
	\end{align*}
	Moreover,
	\begin{align}
		\label{stimaDH^2}
		&|D [\eta \, H(|Du|-1)_{+}]|^{2} \nonumber\\
		\leq & \,2\,|D\eta |^{2}[H((|Du|-1)_{+})]^{2}+2\eta ^{2}[H^{\prime
		}((|Du|-1)_{+})]^{2}\,|D((|Du|-1)_{+})|^{2}\nonumber \\
		\leq & 16\,|D\eta |^{2}\, \left [h_{2}(1+(|Du|-1)_{+})(1+(|Du|-1)_{+})^{2} \right ]\nonumber\\
		& +2\,\eta
		^{2}\,h_{1}(1+(|Du|-1)_{+})\,|D(|Du|-1)_{+}|^{2}.
	\end{align}
	Repeat now argument that leads to \eqref{si usa?2} but with $\Phi(t):=\frac{t}{t+1}$. Since $\Phi(t)+(1+t)\Phi'(t)=1$ for every $t\in\R$, we get
	\begin{align}
		\label{è giusta?}
		&\int_\Omega\eta^2  h_1(1+(|Du|-1)_{+})|D(|Du|-1)_+|^2dx\nonumber\\
		& \le C\int_\Omega (\eta^2+ |D\eta|^2) h_2(1+(|Du|-1)_{+})(1 + (|Du|-1)_+)^2 dx.
	\end{align}
	Putting together \eqref{è giusta?} and \eqref{stimaDH^2} we get
	\begin{equation*}
		\int_\Omega |D [\eta \, H(|Du|-1)_{+}]|^{2}dx\le C\int_\Omega (\eta^2+ |D\eta|^2) h_2(1+(|Du|-1)_{+})(1 + (|Du|-1)_+)^2 dx.
	\end{equation*}
	By Sobolev inequality, we get
	\begin{align}
		\label{SobvolH}
		&\left[\int_\Omega | \eta \, H(|Du|-1)_{+}|^{2^*}dx\right]^{\frac{2}{2^*}}\nonumber\\
		&\le C_S\int_\Omega (\eta^2+ |D\eta|^2) h_2(1+(|Du|-1)_{+})(1 + (|Du|-1)_+)^2 dx.
	\end{align}
	Let $\delta:=\frac{2^*}{\alpha}$, then by hypothesis \eqref{H2} we have 
	\begin{equation*}
		H(t)^{\frac{2^*}{\delta}}\ge \frac{1}{c_1}t^2h_2(t), \quad t\ge 1,
	\end{equation*}
	so \eqref{SobvolH} becomes
	\begin{align*}
		&\left[\int_\Omega \eta^{2^*}\left( h_2(1+(|Du|-1)_{+})(1 + (|Du|-1)_+)^2\right)^{\delta} dx\right]^{\frac{2}{2^*}}\\
		&\le c_2'\int_\Omega (\eta^2+ |D\eta|^2) h_2(1+(|Du|-1)_{+})(1 + (|Du|-1)_+)^2 dx.
	\end{align*}
	with $c_2'$ depending on $c_1$ and $C_S$. If we set
	\begin{equation*}
		V(x):=(1+(|Du|-1)_{+})^{2} h_2(1+(|Du|-1)_{+}),
	\end{equation*}
	then by the properties of $\eta$ we have
	\begin{align*}
		&\left[\int_{B_{ \rho}} \left( h_2(1+(|Du|-1)_{+})(1 + (|Du|-1)_+)^2\right)^{\delta} dx\right]^{\frac{2}{2^*}}\\
		&\le \frac{4c_2'}{(R-\rho)^2}\int_{B_{ R}} h_2(1+(|Du|-1)_{+})(1 + (|Du|-1)_+)^2 dx,
	\end{align*}
	i.e.
	\begin{equation*}
		\left[\int_{B_{ \rho}} V(x)^{\delta} dx\right]^{\frac{2}{2^*}}\le \frac{4c_2'}{(R-\rho)^2}\int_{B_{ R}}V(x)dx.
	\end{equation*}
	
	Let $\gamma\ge\frac{2^*}{2}$, then by H\"older inequality we obtain
	\begin{align*}
		\int_{B_\rho}V(x)^\delta  dx\le\frac{4c_2'}{(R-\rho)^2}\left(\int_{B_R}V(x)^{\delta}  dx\right)^{\frac{2^*}{2\gamma}}\left(\int_{B_R}V(x)^{\frac{\gamma-\delta}{\gamma-1}}  dx\right)^{\frac{\gamma-1}{\gamma}}.
	\end{align*}
	As before, we consider $\rho_i:= \bar R - \frac{\bar R-\bar\rho}{2^i}$ with $i\in\N_0$, and we let $\rho=\rho_i$ and $R=\rho_{i+1}$. Fixed $i\in\N$, we iterate the previous inequality $i$-times and we pass to the limit for $i\to \infty$ obtaining
	\begin{align*}
		&\int_{B_{\bar \rho}}V(x)^\delta  dx\le \frac{c_3'}{(\bar R-\bar \rho)^{\frac{22^*\gamma}{2\gamma-2^*}}}\left(\int_{B_{\bar R}}V(x)^{\frac{\gamma-\delta}{\gamma-1}}  dx\right)^{2^*\frac{\gamma-1}{2\gamma-2^*}}.
	\end{align*}
	Let $\gamma$ be such that $\frac{\gamma-\delta}{\gamma-1}=\frac{1}{\beta}$, i.e.
	\begin{equation*}
		\gamma=\frac{\delta\beta-1}{\beta-1},
	\end{equation*}
	with $\beta$ defined in \eqref{H3}, then using \eqref{H3} in the previous inequality we get
	\begin{equation*}
		\int_{B_{\bar \rho}}V(x)^\delta  dx\le c_4' \left(\int_{B_{\bar R}}1+f(Du)  dx\right)^{2^*\frac{\gamma-1}{2\gamma-2^*}},
	\end{equation*}
	with $c_4'$ depending on $c_2,c_3',\alpha,\beta, \bar \rho, \bar R$ and $n$. Using H\"older inequality again we obtain
	\begin{align*}
		&\int_{B_{\bar \rho}}V(x)  dx\le |B_{\bar\rho}|^{1-\frac{1}{\delta}}\left(\int_{B_{\bar \rho}}V(x)^\delta  dx\right)^{\frac{1}{\delta}}\le c_5'\left(\int_{B_{\bar R}}1+f(Du)   dx\right)^{2^*\frac{\gamma-1}{(2\gamma-2^*)\delta}}.
	\end{align*}
	By \eqref{apriori1}, we get
	\begin{align*}
		||V||_{L^\infty(B_{\bar \rho},\R^3)}\le \left[c_6'\left(\int_{B_{\bar R}}1+f(Du)   dx\right)^{2^*\frac{\gamma-1}{(2\gamma-2^*)\delta}}\right]^{\frac{2^*-2}{2^*-\alpha}}.
	\end{align*}
	To conclude, we observe that $V\ge h_2(1)|Du|$ which leads to
	\begin{align*}
		||Du||_{L^\infty(B_{\bar \rho},\R^3)}\le c_7'\left(\int_{B_{\bar R}}1+f(Du)   dx\right)^{\theta},
	\end{align*}
	with $\theta=2^*\frac{\gamma-1}{(2\gamma-2^*)\delta}\frac{2^*-2}{2^*-\alpha}$, and this proves \eqref{apriori}.
	
	\section{Proof of Theorem \ref{Lipschitz}}
	\noindent
	The proof of the theorem is divided in three steps. In the first step we consider a suitable sequence of approximating functionals. In the second step we consider the minimizers of the aformentioned functionals with appropriate boundary conditions such that they satisfy the BSC (see \ref{bscdefn}). Finally, in step three, we pass to the limit exploiting the coercivity of $f$.\\
	
	\noindent
	{\it Step 1} By the same argument as in  \cite[Theorem 2.1]{EMMP}, there exists  a sequence $f_k\in\mathcal{C}^2(\R^n)$ of locally uniformly convex functions such that 
	\begin{enumerate}
		\item $\tilde f_k$ satisfies \eqref{H1}-\eqref{H3} with $2h_2$ instead of $h_2$,  constants 
		$C_{1}$ and $C_{2}$ independent from $k$;
		\item $\tilde f_k$ uniformly converges to $f$ on compact sets;
		\item for every $\delta >0$ and for every $%
		k$ sufficiently large 
		\begin{equation}
			\label{delta}
			f(\xi)\leq
			\begin{cases}
				\tilde f_k(\xi)+\delta & \text{ if $|\xi |\leq t_0+2$}\\
				\tilde f_k(\xi)        & \text{ if $|\xi |> t_0+2$};
			\end{cases}
		\end{equation}
		\item for every  $\xi\in\R^n$
		$$
		f(\xi)-1\le \tilde f_k(\xi)\le f(\xi)+|\xi|+1.
		$$
	\end{enumerate}
	Indeed, for every $k \in \mathbb{N}$, consider the sequence $f_k$ defined as follows (see \cite{MS})
	\[
	f_k(\xi) = f(\xi)(1 - \phi(\xi)) + (f \phi) * \eta_k(\xi),
	\]
	where $(\eta_k)_k$ is a family of standard mollifiers and $\phi \in \mathcal C^\infty(\mathbb{R}^n)$, $0 \leq \phi(\xi) \leq 1$ for every $\xi \in \mathbb{R}^n$, $\phi(\xi) = 1$ if $|\xi| \leq t_0 + 1$ and $\phi(\xi) = 0$ if $|\xi| \geq t_0 + 2$. By construction, $f_k \in C^2(\mathbb{R}^n)$ and
	\begin{equation*}
		f_k(\xi)=
		\begin{cases}
			f(\xi) \quad |\xi|\ge t_0+2\\
			(f\phi)\ast\eta_k(\xi) \quad |\xi|\le t_0+1
		\end{cases}
	\end{equation*}
	so it follows that $f_k \to f$ uniformly. We can assume that $|f(\xi) - f_k(\xi)| \leq 1$ for every $\xi \in \mathbb{R}^n$ and $k \in \mathbb{N}$. Furthermore, it is possible to prove that for sufficiently large $k$, $f_k$ is a convex function and $D^2 f_k(\xi)$ is positive definite for $|\xi| > t_0 + 1$. Since $f_k(\xi) = f(\xi)$ for $|\xi| > t_0 + 2$, then \eqref{H1}, \eqref{H2}, and \eqref{H3} hold with $t_0 + 2$ instead of $t_0$.
	Let $h \in \mathcal C^2([0,+\infty))$ be the positive, increasing, convex function defined by
	\[
	h(t) = \begin{cases}
		\frac{1}{8}(6t^2-t^4+3), & \text{if } t \in [0,1), \\
		t, & \text{if } t \geq 1.
	\end{cases}
	\]
	Observe that, $h''$ is nonnegative and $h'' > 0$ in $[0,1)$. For $k \in \mathbb{N}$ denote
	\[
	\tilde{f}_k(\xi) = f_k(\xi) + \frac{1}{k} h\left(\frac{|\xi|}{t_0+2}\right), \quad \text{for every } \xi \in \mathbb{R}^n,
	\]
	and notice that $\tilde{f}_k \in C^2(\mathbb{R}^n)$, $|D^2\tilde f_k|\ge C$, for some $C\in\R$, and that it is uniformly convex on compact subsets of $\mathbb{R}^n$. 
	Moreover, by \eqref{H1} and \cite[(3.3)]{MP}
	\begin{equation*}
		h_1(|\xi|)|\lambda|^2\le\sum_{i,j=1}^n (\tilde f_k)_{\xi_i, \xi_j}(\xi)\lambda_i\lambda_j\le \left(h_2(|\xi|)+\frac{1}{k(t_0+2)}\frac{1}{|\xi|}\right)|\lambda|^2
	\end{equation*}
	for every $\xi, \eta \in \mathbb{R}^n$, $|\xi| \geq t_0 + 2$. On the other hand, since $h_2$ is increasing
	\[
	t h_2(t) \geq (t_0 + 2)h_2(t_0 + 2) \quad \text{for } t \geq t_0 + 2.
	\]
	So for $k$ sufficiently large, we get that
	\begin{equation*}
		h_2(|\xi|)+\frac{1}{k(t_0+2)}\frac{1}{|\xi|}\le h_2(|\xi|)+\frac{h_2(t)}{k(t_0+2)h_2(t_0+2)}\frac{1}{|\xi|} \le 2h_2(t)
	\end{equation*}
	Therefore, $\tilde{f}_k$ satisfies \eqref{H1}, \eqref{H2}, and \eqref{H3} with $t_0 + 2$ instead of $t_0$, $2h_2$ instead of $h_2$, and constants $C_1$ and $C_2$ independent from $k$.
	
	Now, define the integral functional
	\begin{equation}
		\label{approx. functional}
		F_k(v) = \int_{B_R} \tilde{f}_k(Dv) +g(x, v) \, dx, \quad v \in W^{1,1}(B_R),
	\end{equation}
	where $B_R$ is a ball compactly contained in $\Omega$. Let $u \in W^{1,1}_{\text{loc}}(\Omega)$ be a local minimizer of the functional \eqref{functional} and let $u_\varepsilon$ be a mollification of $u$. Since by construction $u_\varepsilon \in C^2(\overline{B_R})$, then $u_\varepsilon$ verifies the bounded slope condition (see for instance \cite{Miranda} and \cite[Theorem 1.1 and Theorem 1.2]{Giusti}). By construction, $\tilde{f}_k$ is superlinear and $F_k$ has a unique minimizer $v_{\varepsilon,k}$ among Lipschitz continuous functions in $B_R$ with boundary value $u_\varepsilon$ on $\partial B_R$.\\
	
	\noindent
	{\it Step 2} In this step we prove that $v_{k.\varepsilon}\in W^{1,\infty}(B_R)$ when $R$ is sufficiently small. 
	We recall that if $u_{\varepsilon}$ satisfies the (BSC) on $B_R$, then for every $z \in \partial B_R,$ there exists $\kappa_z^-$ and $\kappa_z^+$ such that
	\begin{equation}\label{BSC}
		\kappa_z^-\cdot (x - z) + u_{\varepsilon}(z) \le \, u_{\varepsilon}(x) \le \kappa_z^+ \cdot(x - z) + u_{\varepsilon}(z) \qquad \forall x \in \partial \Omega,
	\end{equation}
	see \cite{Miranda, Giusti} for more details.  Our aim is to construct lower and upper Lipschitz barriers for the boundary datum. We use the same argument as in \cite{EPT, FT, GT}.\\
	
	\noindent
	First, we recall that by Proposition \cite[Proposition 2.1-ii)]{EPT}  we have 
	that $\tilde f_k^*$  is defined in $\R^n$ and superlinear. Now, let us fix $z\in \partial B_R$ and let $\kappa_z^-$ as in the \eqref{BSC}. Consider now the set
	\[
	\left \{\frac{n}{L} \tilde f_k^*\left (\frac{L}{n}x \right) - \kappa_z^- \cdot x - c \le 0\right \} =  \Omega_{\kappa_z^-, c}.
	\]
	For $c$ sufficiently large $\Omega_{\kappa_z^-, c}$ is not empty and convex. Moreover, since $\tilde f^*_k$ is superlinear, then $\Omega_{\kappa_z^-, c}$ is bounded. Furthermore, the fact that $\tilde f^*_k$ is finite for every $x\in\R^n$ yields that 
	\begin{equation*}
		\lim_{c\to \infty} \min\{|x|:\ x\in\partial \Omega_{\kappa_z^-, c}\}=+\infty.
	\end{equation*}
	
	By Proposition \ref{polare} it  follows that, for $c$ sufficiently large,  $\partial \Omega_{\kappa_z^-, c}$ is $\mathcal{C}^2$. Moreover, there exists $s_0\in \R$ such that $\tilde f_k^*\in\mathcal{C}^2(\R^n\setminus B_{s_0}(0))$, and $|D^2\tilde f_k|\ge C$. Observe now that if we fix $\frac{L}{n}x\in\R^n\setminus B_{s_0}$ and we define
	\[
	\varphi(t)=\tilde f_k^*\left(t\frac{x}{|x|}\right)\quad\text{ for }\, t\ge s_{0}.
	\]
	It follows that, for $t=\frac{L}{n}|x|$, there exists a non negative constant $C_0$ such that
	\[
	\left|Df_k^*\left(\frac{L}{n}x\right)\right|\ge \varphi'\left(\frac{L}{n}|x|\right)\ge \int_{s_0}^{\frac{L}{n}|x|}C\,d\tau +C_0,
	\]
	and the last term goes to $+\infty$ as $|x|\to +\infty$. We can now do the same computations as in Step 2 of the proof of \cite[Theorem 4.5]{GT}. It follows that the principal curvatures of $\partial \Omega_{\kappa_z^-, c}$ at every point $x$ are less or equal to
	
	\begin{equation*}
		\frac{|D^2 \tilde f^*_k(\frac{L}{n} x)|}{|D \tilde f^*_k
			(\frac{L}{n} x)|} \le \frac{1}{CC_0}.
	\end{equation*}
	Let $\delta_0=CC_0$. Let $R<\delta_0$ and $\nu$ be the normal vector to $\partial B_R$ in $z.$ Let $x_z \in \partial \Omega_{\kappa_z^-, c}$ be such that its normal vector is exactly $\nu$. 
	
	Consider now the function
	\[
	v_z(x):=\frac{n}{L} \tilde f^*_k\left (\frac{L}{n} (x - (z - x_z)) \right) + u_{\varepsilon}(z) - \frac{n}{L} \tilde f^*_k\left (\frac{L}{n} x_z \right), 
	\]
	and define the set
	\begin{equation*}
		{\tilde\Omega_{\kappa_z^-,c}}=\left\{v_z(x) -\kappa^-_z\cdot(x-(z-x_z))- u_{\varepsilon}(z) + \frac{n}{L} \tilde f^*_k\left (\frac{L}{n} x_z \right)-c\le 0\right\}.
	\end{equation*}
	Clearly, ${\tilde\Omega_{\kappa_z^-,c}}= \Omega_{\kappa_z^-,c} +(z-x_z)$ so the curvature of $\partial{\tilde\Omega_{\kappa_z^-,c}}$ in $z$ is the same of $\partial\Omega_{\kappa_z^-,c}$ in $x_z$. Since $R<\delta_0$ we have that $B_R\subset {\tilde\Omega_{\kappa_z^-,c}}$ and $z\in \partial B_R\cap \partial{\tilde\Omega_{\kappa_z^-,c}}$. Moreover, we have that
	\begin{equation*}
		{\tilde\Omega_{\kappa_z^-,c}} =\{v_z(x) \le \kappa^-_z\cdot(x-z)+ u_{\varepsilon}(z)\}.  
	\end{equation*}
	By applying the comparison principle (in \cite[Theorem 2.4]{FT} and in \cite[Theorem 2.4]{GT}) between the minimizer $v_{k,\varepsilon}$ and the function $v_k$ we get that $v_{k,\varepsilon}(x)\ge v_z(x)$ a.e. in $B_R$. 
	To obtain the lower barrier it is enough to consider for every $x\in B_R$ the function
	\begin{equation*}
		\ell^-(x):= \sup_{z\in\partial B_R} v_z(x).
	\end{equation*}
	Repeating an analogous construction we can construct also the upper barrier $\ell^+$.
	
	By construction, the functions $\ell^\pm$ are Lipschitz in $B_R$. It follows that by \eqref{G4},\eqref{G3}, and \cite[Theorem 2.2]{GT} (see also \cite[Theorem 5.2]{MTrado}) we have $v_{k.\varepsilon}\in W^{1,\infty}(B_R)$.\\
	
	\noindent
	\textit{Step 3}
	Now we apply Lemma \ref{Lemmabound} with $f_-(\xi)=f(\xi)-1$ and $f_+(\xi)= f(\xi)+|\xi|+1$. It follows that there exists a constant $\tilde M$ such that $\|v_{k,\varepsilon}\|_{L^\infty(B_R)}\le \tilde M$ for every $k$ and $\varepsilon$.
	By hypothesis \eqref{G1} and \eqref{G2} we get that there exists a constant $\tilde K$ such that that $\|g(x,u_{\varepsilon})\|_{L^{1}(B_R)} \le \tilde K$ and $\|g(x,v_{k,\varepsilon})\|_{L^{1}(B_R)} \le \tilde K$.
	From Step 2 we get that $v_{k,\varepsilon}\in W^{1,\infty}(B_R)$
	and from Lemma \ref{apriori_estimate} we have
	\[
	\|D v_{k,\varepsilon}\|_{L^\infty(B_\rho; \mathbb{R}^n)}\le C \, \left(\frac{1}{(R-\rho)^n}\int_{B_R} 1+\tilde f_k(D v_{k,\varepsilon})\,dx\right)^\theta
	\]
	with $C$ not depending on $k$ and $\varepsilon$. Now we add and subtract $g(x,v_{k,\varepsilon})$ in the right hand side and from the minimality of $v_{k,\varepsilon}$ we get
	\[
	\begin{split}
		\|Dv_{k,\varepsilon}\|_{L^\infty(B_\rho; \mathbb{R}^n)}
		\le & C \left(\frac{1}{(R-\rho)^n}\int_{B_R} 1+f_k(D v_{k,\varepsilon})+g(x,v_{k,\varepsilon})\,dx \right.\\
		& \left. -\frac{1}{(R-\rho)^n}\int_{B_R}  g(x,v_{k,\varepsilon})\,dx\right)^\theta \\
		\le & C \, \left(\frac{1}{(R-\rho)^n}\int_{B_R} 1+f_k(D u_{\varepsilon})+g(x,u_{\varepsilon})\,dx +\frac{\tilde K}{(R-\rho)^n}\right)^\theta .
	\end{split}
	\]
	It follows that
	\[
	\limsup_{k\to\infty}\|D v_{k,\varepsilon}\|_{L^\infty(B_\rho; \mathbb{R}^n)}
	\le M_\varepsilon
	\]
	with 
	\[
	M_\varepsilon= C
	\left[\frac{1}{(R-\rho)^n}\left(\int_{B_R} 1+f(D u_{\varepsilon})+g(x,u_{\varepsilon})\,dx +\tilde K\right)\right]^\theta .
	\]
	Since $(v_{\varepsilon ,k})_k$ is uniformly bounded in $W^{1,\infty }(B_{\rho })$, then there exists a subsequence $k_{j}\rightarrow \infty$
	such that $(v_{\varepsilon ,k_{j}})_j$ is weakly$^{\ast }$ convergent in $W^{1,\infty }(B_{\rho })$. 
	Now we fix a sequence $\rho_j\rightarrow R$ and, 
	by a diagonalization argument, we extract a subsequence, that we still denote by $(v_{\varepsilon ,k_{j}})_j$, weakly$^*$ converging to
	$\bar{v}_{\varepsilon }$ in $W^{1,\infty }(B_{\rho })$ for every $\rho<R$.
	
	We recall that $(v_{\varepsilon ,k_{j}})\subset
	u_\varepsilon+W_{0}^{1,1}(B_{R})$ and for every $\rho <R$
	\begin{equation}
		\Vert D\bar{v}_{\varepsilon }\Vert _{L^{\infty }(B_{\rho}; \mathbb{R}^n)}\leq
		M_\varepsilon.  \label{normepsilon}
	\end{equation}
	
	Now we want to prove that $v_{\varepsilon ,k_{j}}$ weakly converges to $\bar{v}_{\varepsilon }$
	in $W^{1,1}(B_{R})$ so that $\bar{v}_{\varepsilon }\in u_\varepsilon+W_{0}^{1,1}(B_{R})$. Indeed, by the minimality of $v_{\varepsilon ,k_{j}}$, as $j\rightarrow \infty $ we have 
	\begin{equation*}
		\begin{split}
			\int_{B_{R}}f(Dv_{\varepsilon ,k_{j}})\,dx
			& \leq \int_{B_{R}}1+\tilde f_{k_{j}}(Dv_{\varepsilon ,k_{j}})
			+g(x,v_{\varepsilon ,k_{j}})\,dx - \int_{B_{R}}g(x,v_{\varepsilon ,k_{j}})\,dx \\
			& \leq \int_{B_{R}}{\tilde f}_{k_{j}}(Du_{\varepsilon})
			+g(x,u_{\varepsilon})\,dx+(\tilde K +1)  \\
			&\rightarrow \int_{B_{R}}f(Du_{\varepsilon
			})+g(x,u_{\varepsilon})\,dx+(\tilde K+1).
		\end{split}
	\end{equation*}%
	The superlinearity of $f$ and  de la Vall\'{e}e-Poussin Theorem imply that we can choose the sequence $k_{j}$ such
	that $Dv_{\varepsilon ,k_{j}}\rightharpoonup D\bar{v}_{\varepsilon }$ in $%
	L^{1}(B_{R}; \mathbb{R}^n)$ and then $(v_{\varepsilon ,k_{j}}-u_{\varepsilon
	})\rightharpoonup (\bar{v}_{\varepsilon }-u_{\varepsilon })\in
	W_{0}^{1,1}(B_{R})$.
	On the other hand, since $v_{\varepsilon,k_{j}}$ is a minimizer of  \eqref{approx. functional}, then
	\begin{equation*}
		\begin{split}
			\int_{B_{R }}&f(Dv_{\varepsilon ,k_{j}}) + g(x,v_{\varepsilon ,k_{j}})\,dx\\ 
			= &\int_{B_{R}}{\tilde f}_{k_{j}}(Dv_{\varepsilon ,k_{j}})+g(x,v_{\varepsilon ,k_{j}})\,dx+\int_{B_{R}}(f(Dv_{\varepsilon ,k_{j}})-{\tilde f}_{k_{j}}(Dv_{\varepsilon ,k_{j}})) \, dx \\
			\leq & \int_{B_{R}} \tilde f_{k_{j}}(Du_{\varepsilon })+g(x,u_{\varepsilon})\,dx+\int_{B_{R}}(f(Dv_{\varepsilon ,k_{j}})-{\tilde f}_{k_{j}}(Dv_{\varepsilon ,k_{j}})) \, dx.
		\end{split}%
	\end{equation*}%
	By \eqref{delta}, for every $\delta>0$ there exists $\bar k$ such that for every $k_j>\bar k$ 
	\begin{equation*}
		\int_{B_{R }}f(Dv_{\varepsilon ,k_{j}}) + g(x,v_{\varepsilon ,k_{j}})\,dx\leq \int_{B_{R}} \tilde f_{k_{j}}(Du_{\varepsilon })+g(x,u_{\varepsilon})\,dx+\delta |B_{R}|,
	\end{equation*}
	and by lower semicontinuity in $W^{1,1}(B_{R})$, passing to the limit for $
	j\rightarrow \infty$, we get 
	\begin{equation*}
		\begin{split}
			\int_{B_{R }}&f(D\bar{v}_{\varepsilon }) +g(x,\bar v_{\varepsilon })\,dx
			\leq \liminf_{j\rightarrow
				\infty }\int_{B_{R }}f(Dv_{\varepsilon ,k_{j}}) + g(x,v_{\varepsilon ,k_{j}})\,dx\\
			&\leq
			\lim_{j\rightarrow \infty }\int_{B_{R}}\tilde f_{k_{j}}(Du_{\varepsilon })+g(x,u_{\varepsilon})\,dx+\delta |B_{R}| \\
			& =\int_{B_{R}}f(Du_{\varepsilon })+g(x,u_{\varepsilon})\,dx+\delta |B_{R}|
		\end{split}%
	\end{equation*}%
	for every $\delta >0$ and so for  $
	\delta \rightarrow 0$ 
	\begin{equation}\label{Jensen}
		\int_{B_{R}}f(D\bar{v}_{\varepsilon })+g(x,\bar v_{\varepsilon })\,dx\leq \int_{B_{R}}f(Du_{\varepsilon})+g(x,u_{\varepsilon})\,dx.
	\end{equation}
	Since $u_\varepsilon$ is a mollification of $u$, then it follows that, by Jensen inequality and Dominated Convergence theorem,  
	\begin{equation}\label{Jensen2}
		\lim_{\varepsilon\to 0} \int_{B_{R}}f(Du_{\varepsilon})+g(x,u_{\varepsilon})\,dx=\int_{B_{R}}f(Du)+g(x,u)\,dx.
	\end{equation}
	Therefore, the right hand side of \eqref{Jensen} is uniformly bounded w.r.t. $\varepsilon$.
	We can now apply again de la Vall\'{e}e-Poussin Theorem in order to extract  a sequence $\varepsilon _{j}\rightarrow 0$ such that $\bar{%
		v}_{\varepsilon _{j}}-u_{\varepsilon _{j}}\rightharpoonup \bar{v}-u$ in $%
	W_{0}^{1,1}(B_{R})$. By \eqref{Jensen}, \eqref{Jensen2}, and the lower semicontinuity of the functional we get
	\begin{equation*}
		\begin{split}
			& \int_{B_{R}}f(D\bar{v})+g(x,\bar{v})\,dx
			\leq \liminf_{j\rightarrow \infty }\int_{B_{R}}f(D\bar{v}_{\varepsilon _{j}})
			+g(x,\bar v_{\varepsilon _{j}})\,dx\\
			&\leq \lim_{j\rightarrow \infty
			}\int_{B_{R}}f(Du_{\varepsilon _{j}})+g(x,u_{\varepsilon_j})\,dx
			=\int_{B_{R}}f(Du)+g(x,u)\,dx.  
		\end{split}
	\end{equation*}
	From the previous inequality, it follows that $\bar{v}$ is another minimizer for \eqref{functional} with $\Omega
	=B_{R} $. Moreover, by \eqref{normepsilon} 
	we can assume that  $\{\bar{v}_{\varepsilon _{j}}\}_{j}$ is weakly$
	^{\ast }$ convergent to $\bar{v}$ in $W^{1,\infty }(B_{\rho })$ for every $%
	0<\rho <R$.
	Thus, by \eqref{normepsilon} 
	and \eqref{Jensen2},  we get that for every $0<\rho<R$  
	\begin{equation*}
		\begin{split}
			&\Vert D\bar{v}\Vert _{L^{\infty}(B_{\rho }; \mathbb{R}^n)} \leq \,\liminf_{j\rightarrow
				\infty }\Vert D\bar{v}_{\varepsilon _{j}}\Vert _{L^{\infty }(B_{\rho }; \mathbb{R}^n)} \\
			& \leq
			\lim_{j\rightarrow \infty }\,C\,\left\{ \frac{1}{(R-\rho )^{n}}
			\left (\int_{B_{R}}1+f(Du_{\varepsilon _{j}})+g(x,u_{\varepsilon_j})\,dx +\tilde K \right )\right\}^{\theta } \\
			& =\,C\,\left\{ \frac{1}{(R-\rho )^{n}} \left (\int_{B_{R}} f(Du)+g(x,u)\,dx +\kappa \right )\right\}
			^{\theta },
		\end{split}
	\end{equation*}
	where $\kappa = \tilde{K} + |B_R|.$
	
	Since $\bar{v}$ and $u$ are two different minimizers of $F$ in $B_{R}$ and $f(\xi )$ is strictly convex for $|\xi |>t_{0}$ (see \eqref{H1}), by arguing as in 
	\cite[Proof of Theorem 2.1]{EMM} it is possible to prove that the set 
	\begin{equation*}
		E_{0}:=\left\{ x\in B_{R}:\left\vert \frac{Du(x)+D\bar{v}(x)}{2}\right\vert
		>t_{0}\right\}\cap\{Du\neq D\bar{v}\}.
	\end{equation*}%
	has zero measure. Therefore, 
	\begin{align*}
		\Vert Du\Vert _{L^{\infty }(B_{\rho }; \mathbb{R}^n)}&\leq \,\Vert Du+D\bar{v}\Vert
		_{L^{\infty }(B_{\rho }; \mathbb{R}^n)}+\Vert D\bar{v}\Vert _{L^{\infty }(B_{\rho }; \mathbb{R}^n)}\\
		&\leq
		\,2t_{0}+\Vert D\bar{v}\Vert _{L^{\infty }(B_{\rho }; \mathbb{R}^n)}<+\infty,
	\end{align*}
	i.e. $u\in W^{1,\infty}(B_\rho)$.
	
	\color{black}
	\section{Acknowledgments}
	The author have been partially supported through the INdAM-GNAMPA 2025 project ``Minimal surfaces: the Plateau problem and behind'' cup E5324001950001 and the project: Geometric-Analytic Methods for PDEs and Applications (GAMPA) , ref. 2022SLTHCE – cup E53D2300588 0006 - funded by European Union - Next Generation EU within the PRIN 2022 program (D.D. 104 - 02/02/2022 Ministero dell’Università e della Ricerca). This manuscript reflects only the authors’ views and opinions and the Ministry cannot be considered responsible for them.

\end{document}